\documentclass[11pt]{article}

\usepackage[a4paper,margin=25mm]{geometry}
\usepackage{authblk}
\usepackage[T1]{fontenc}
\usepackage[utf8]{inputenc}
\usepackage{lmodern}
\usepackage{amsmath,amssymb,amsthm,mathtools}
\usepackage{array,longtable,booktabs}
\newcolumntype{P}[1]{>{\raggedright\arraybackslash}p{#1}}
\usepackage{microtype}
\usepackage[dvipsnames]{xcolor}
\usepackage[colorlinks=true,linkcolor=MidnightBlue,citecolor=MidnightBlue,urlcolor=MidnightBlue]{hyperref}

\newtheorem{theorem}{Theorem}[section]

\newtheorem{lemma}[theorem]{Lemma}
\newtheorem{corollary}[theorem]{Corollary}
\theoremstyle{remark}
\newtheorem{remark}[theorem]{Remark}

\newcommand{\F}{\mathbb F}
\newcommand{\Leib}{\operatorname{Leib}}
\newcommand{\Aut}{\operatorname{Aut}}

\newcommand{\zleft}{\zeta^{\mathrm{left}}}

\newcommand{\Fx}{\F^{\times}}

\title{Automorphism Groups of Three-Dimensional Leibniz Algebras:\\
A Complete Description}

\author[1]{Leonid A. Kurdachenko}
\author[1]{Oleksandr O. Pypka}
\author[2]{Mykola M. Semko}

\affil[1]{Oles Honchar Dnipro National University, Dnipro, Ukraine}
\affil[2]{State Tax University, Irpin, Ukraine}

\date{}

\begin{document}

\maketitle

\vspace{-3.5em}

\begin{center}
\small
E-mail addresses:\\[2pt]
\href{mailto:lkurdachenko@gmail.com}{lkurdachenko@gmail.com}
(L.A. Kurdachenko)\\
\href{mailto:sasha.pypka@gmail.com}{sasha.pypka@gmail.com}
(O.O. Pypka)\\
\href{mailto:dr.mykola.semko@gmail.com}{dr.mykola.semko@gmail.com}
(M.M. Semko)
\end{center}

\begin{abstract}
Automorphism groups are among the most natural structural invariants of an algebra, and their explicit determination is a basic problem in the structure theory of Leibniz algebras.  In a series of earlier papers, automorphism groups were obtained for several classes of low-dimensional, one-generated, nilpotent, and non-nilpotent Leibniz algebras.  In the present paper we complete this picture for three-dimensional non-Lie left Leibniz algebras over arbitrary fields.  We use a refined organization of the three-dimensional classification into sixteen isomorphism types, including parameterized families.  Previously known cases are not recomputed; instead, they are incorporated by reference, with changes of basis recorded when necessary.  For the remaining types we determine the automorphism groups explicitly through a direct analysis of the automorphism conditions.  Particular attention is paid to exceptional parameter values and to characteristic two.  A final table summarizes the automorphism group of every type in the classification.
\end{abstract}

\medskip
\noindent\textbf{Keywords.} Leibniz algebra; automorphism group; low-dimensional algebra; one-generated Leibniz algebra; arbitrary field.

\medskip
\noindent\textbf{2020 Mathematics Subject Classification.} 17A32, 17A36.

\section{Introduction}

The automorphism group of an algebra records its internal symmetries and is one of the most natural invariants attached to its multiplication.  Besides its intrinsic group-theoretic interest, the automorphism group preserves characteristic ideals and canonical series, acts on subalgebras and ideals, and often provides useful information for classification and isomorphism problems.  In the setting of Leibniz algebras, general properties of automorphisms and derivations, including analogues of the multiplicative and Jordan--Chevalley decompositions in the finite-dimensional complex case, were studied by Ladra, Rikhsiboev and Turdibaev~\cite{Ladra}.  Automorphisms also play a substantial role in the structural theory developed for Leibniz algebras; see, for example,~\cite{Monograph} and the references therein.

The determination of automorphism groups becomes especially concrete in low dimension, where one can combine structural information with explicit matrix calculations.  In our earlier paper~\cite{Old2022}, together with related arbitrary-field classification work such as~\cite{Rakhimov2018}, a detailed analysis of three-dimensional non-Lie Leibniz algebras over arbitrary fields was carried out.  A subsequent reconsideration of the case division in that paper, together with the identification of isomorphic presentations and a refinement of several parameter and characteristic restrictions, allows the resulting list to be organized more economically into sixteen types, including parameterized families.  We use this refined organization throughout the present paper.  The one- and two-dimensional cases are elementary and have already been treated in the low-dimensional automorphism literature~\cite{AutLowDims,AutCyclic}; we therefore restrict attention here to dimension three.

There is by now a substantial collection of results on automorphism groups of Leibniz algebras.  Finite-dimensional one-generated Leibniz algebras were studied in~\cite{AutCyclic2022,AutCyclic}.  Automorphism groups of several low-dimensional and nilpotent Leibniz algebras were obtained in~\cite{AutLowDims,AutLei3,AutAnisotropic,AutNilpotent}, while non-nilpotent three-dimensional cases were considered in~\cite{AutNonNilpotent,AutNonNilpotent2026}.  Di Bartolo, La Rosa and Mancini~\cite{DiBartolo} described, over fields of characteristic different from two, the automorphism group of the non-nilpotent non-Lie Leibniz algebra with one-dimensional derived subalgebra, in arbitrary finite dimension.  More recently, Kaygorodov and Lopatin~\cite{KaygorodovLopatin} determined the automorphism group of every three-dimensional non-Lie Leibniz algebra over the complex field as part of their study of polynomial invariants.

These results leave a natural gap.  For a number of three-dimensional types, the automorphism group was known only over the complex field, only under a characteristic restriction, or only through a more general structural description.  The purpose of the present paper is to close this gap over arbitrary fields.  We do not repeat proofs for cases already completely described in the literature.  Instead, Table~\ref{tab:status} records the full list, the available references, and precisely which cases require new arguments.

The main conclusion can be stated as follows.

\begin{theorem}[Main theorem]\label{thm:main}
Let $L$ be a three-dimensional non-Lie left Leibniz algebra over an arbitrary field $\F$.  Then $\Aut(L)$ is explicitly determined by the formulas proved or cited in the present paper and summarized in Table~\ref{tab:summary}.  In particular, all characteristic-two cases and all parameter families in the refined sixteen-type classification are covered.
\end{theorem}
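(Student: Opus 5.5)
The proof will be a case-by-case verification over the refined sixteen-type classification of three-dimensional non-Lie left Leibniz algebras. I take as given that every such $L$ over $\F$ is isomorphic to exactly one algebra in this list, with its parameters and characteristic restrictions as recorded in Table~\ref{tab:status}. Since $\Aut$ is an isomorphism invariant up to conjugation by the isomorphism, it suffices to compute $\Aut(L)$ for each normal form.

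For the types already treated in the literature I will only transport the known answers. These come from the one-generated cases~\cite{AutCyclic2022,AutCyclic}, the nilpotent and low-dimensional cases~\cite{AutLowDims,AutLei3,AutAnisotropic,AutNilpotent}, the non-nilpotent cases~\cite{AutNonNilpotent,AutNonNilpotent2026}, and~\cite{DiBartolo} when $\operatorname{char}\F\neq 2$. Whenever the cited presentation differs from ours, I will write down the explicit change of basis $P$ and replace the cited group $G$ by $P^{-1}GP$. I must also check that the cited result really holds over arbitrary $\F$ and not only over $\mathbb C$ or in characteristic $\neq 2$; if it does not, the type moves to the list needing new arguments. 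The complex-field answers of~\cite{KaygorodovLopatin} will serve only as a consistency check.

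For each remaining type I will use the same scheme. First I locate the invariant subspaces that any automorphism $\varphi$ must preserve: the left center $\zleft(L)$, the Leibniz kernel $\Leib(L)$ (the span of the squares $[a,a]$), the derived subalgebra $[L,L]$, and the terms of the lower central series. Preserving these forces many matrix entries of $\varphi$ to vanish, so in a basis adapted to these subspaces $\varphi$ is block triangular. Next I impose $\varphi([e_i,e_j])=[\varphi(e_i),\varphi(e_j)]$ on basis pairs. This gives a small polynomial system in the remaining entries. I solve it and record the result as an explicit matrix group, for example a semidirect product of a torus $\Fx$ or $\Fx\times\Fx$ with a unipotent part isomorphic to $(\F,+)$ or $\F^2$. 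Conversely, I check that every matrix of the stated form satisfies all nine product identities, so that the description is exact.

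The main obstacle is the exceptional cases. Specific parameter values can make two otherwise independent conditions coincide and enlarge the group, for example allowing an extra swap or a larger diagonal torus. In characteristic two, a relation like $2a=0$ or $a^2=a\cdot b$ changes which equations constrain the variables, and $[a,a]$ interacts differently with symmetric products. Families of the form $[e_3,e_3]=\alpha e_1$, where solvability depends on whether $\alpha$ lies in $(\Fx)^2$, also require care, since the answer can depend on square classes of $\F$. For each parametrized system I will therefore list the parameter values and characteristics at which some coefficient (typically $\lambda-1$, $\lambda+1$, $2$, or a discriminant) vanishes, and treat those separately. Finally I collect all the formulas into Table~\ref{tab:summary}, which proves Theorem~\ref{thm:main}.
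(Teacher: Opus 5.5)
Your plan is correct and takes the same approach as the paper. The paper proves the main theorem by running through the refined sixteen-type list. It imports the known cases from the literature with explicit changes of basis and checks their range of validity, which is why $L_9$ in characteristic two and $L_{14}(\rho)$ for $\rho\ne1$ need new arguments. For the remaining types it uses invariance of $\Leib(L)$, $[L,L]$ and the centers, solves the product conditions and verifies the converse. The exceptional situations you anticipate are the ones the paper treats separately: the extra order-two elements for $L_3(1)$, $L_{14}(-1)$ and $L_{16}(\beta,0)$ outside characteristic two, and the square-class dichotomy for $L_8(\lambda)$.
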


\section{Preliminaries and the sixteen types}

Throughout the paper, $\F$ is an arbitrary field unless a characteristic restriction is stated explicitly.  All Leibniz algebras are \emph{left} Leibniz algebras, so
\begin{equation}\label{eq:Leibniz}
 [x,[y,z]]=[[x,y],z]+[y,[x,z]]
\end{equation}
for all $x,y,z\in L$.  The Leibniz kernel
\[
 \Leib(L)=\langle [x,x]\mid x\in L\rangle_{\F}
\]
is an ideal contained in the left center $\zleft(L)$.  The derived subalgebra is denoted by $[L,L]$ and the center by $\zeta(L)$.

An automorphism of $L$ is an invertible linear map $f:L\to L$ satisfying
\[
 f([x,y])=[f(x),f(y)]
 \qquad (x,y\in L).
\]
We will use the following standard facts in the same form as in our earlier papers on automorphism groups.

\begin{lemma}\label{lem:characteristic}
Let $L$ be a Leibniz algebra and let $f\in\Aut(L)$.  Then
\[
f(\Leib(L))=\Leib(L),\qquad f([L,L])=[L,L],
\]
and $f$ preserves the left center, the right center, and the center of $L$.
\end{lemma}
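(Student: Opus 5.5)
The plan is to verify each claimed invariance directly from the identity $f([x,y])=[f(x),f(y)]$, together with bijectivity of $f$ and finite-dimensionality where it is needed. (If $L$ is infinite-dimensional, we use $f^{-1}$ instead of dimension counting; $f^{-1}$ is also an automorphism.)

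For the Leibniz kernel, take a generator $[x,x]$. Then $f([x,x])=[f(x),f(x)]$, which again lies in $\Leib(L)$. Since $f$ is linear, $f(\Leib(L))\subseteq\Leib(L)$. Applying the same argument to $f^{-1}$, which is also an automorphism, gives $f^{-1}(\Leib(L))\subseteq\Leib(L)$. Combining the two inclusions yields equality. The derived subalgebra is handled identically: $f([x,y])=[f(x),f(y)]\in[L,L]$, so $f([L,L])\subseteq[L,L]$, and the argument with $f^{-1}$ gives the reverse inclusion.

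For the centers, I will use the definitions
\[
\zleft(L)=\{a : [a,x]=0 \text{ for all } x\},
\]
the right center $\{a:[x,a]=0 \text{ for all } x\}$, and $\zeta(L)$, the intersection of the two. Suppose $a\in\zleft(L)$ and $y\in L$. Write $y=f(x)$ with $x=f^{-1}(y)$, which is possible by surjectivity. Then
\[
[f(a),y]=[f(a),f(x)]=f([a,x])=f(0)=0,
\]
so $f(a)\in\zleft(L)$. The right center is treated symmetrically. Again $f^{-1}$ gives the reverse inclusions, so $f(\zleft(L))=\zleft(L)$, the right center is preserved likewise, and hence so is their intersection $\zeta(L)$.

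I do not expect any real obstacle here. The only point needing care is the use of surjectivity of $f$ to let $y=f(x)$ range over all of $L$, and the use of the fact that $f^{-1}$ is again an automorphism to upgrade each inclusion to an equality.
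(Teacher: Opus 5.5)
Your proof is correct, and it is the standard verification. The paper does not prove this lemma at all; it quotes it as a standard fact from the authors' earlier papers, so there is no different route to compare against. One small simplification: for $\Leib(L)$ and $[L,L]$, surjectivity of $f$ alone already shows that $f$ maps the spanning set $\{[x,x]\}$, resp.\ $\{[x,y]\}$, onto itself, so equality follows without invoking $f^{-1}$. Also, no finite-dimensionality is used anywhere in your argument.
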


Let $G=\Aut(L)$ and let $A$ be a $G$-invariant subspace of $L$.  As usual, we put
\[
 C_G(A)=\{f\in G\mid f(a)=a\text{ for every }a\in A\}
\]
and, if $A$ is an ideal of $L$,
\[
 C_G(L/A)=\{f\in G\mid f(x)\in x+A\text{ for every }x\in L\}.
\]

\begin{lemma}\label{lem:normal}
If $A$ is a $G$-invariant subspace of $L$, then $C_G(A)$ is a normal subgroup of $G$.  If $A$ is an ideal of $L$, then $C_G(L/A)$ is also a normal subgroup of $G$.
\end{lemma}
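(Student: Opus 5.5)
The statement has two parts, and both reduce to checking that a set of automorphisms is preserved under conjugation. We may use Lemma~\ref{lem:characteristic} and the observation that $C_G(A)$ and $C_G(L/A)$ are subgroups. For subgroups: the identity lies in both, and closure under composition is immediate. Closure under inverses holds because if $f(a)=a$ then $f^{-1}(a)=a$. Similarly, if $f(x)-x\in A$ for all $x$, then for $y=f^{-1}(x)$ we get $f^{-1}(x)-x=y-f(y)\in A$. So the substance is normality. Fix $f$ in the relevant centralizer and an arbitrary $g\in G$; we must show $g^{-1}fg$ lies in the same set.

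For $C_G(A)$, the key input is $G$-invariance of $A$, which gives $g(A)\subseteq A$. Since $A$ is finite-dimensional and $g$ is injective, in fact $g(A)=A$, and hence $g^{-1}(A)=A$. Now take $a\in A$. Then $g(a)\in A$, so $f(g(a))=g(a)$, and therefore $g^{-1}fg(a)=a$. This is a one-line computation once $g(A)\subseteq A$ is available.

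For $C_G(L/A)$ with $A$ an ideal, we first need $A$ to be $G$-invariant. This is implicit in the hypothesis (the definition of $C_G(L/A)$ was introduced for $G$-invariant $A$), and it is how the lemma is applied to ideals such as $\Leib(L)$ and $[L,L]$, whose invariance is Lemma~\ref{lem:characteristic}. I would state explicitly that $A$ is taken to be a $G$-invariant ideal. Then, for $x\in L$, put $y=g(x)$. Since $f(y)-y\in A$, applying $g^{-1}$ gives
\[
g^{-1}fg(x)-x=g^{-1}\bigl(f(y)-y\bigr)\in g^{-1}(A)=A.
\]
An alternative is to note that $C_G(L/A)$ is the kernel of the homomorphism $G\to GL(L/A)$, $g\mapsto(x+A\mapsto g(x)+A)$, which is well defined precisely because $g(A)=A$.

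The only real subtlety is this invariance requirement for $A$ in the second part. The ideal property by itself plays no role beyond making $L/A$ meaningful. So the main obstacle is just making the $G$-invariance hypothesis explicit; everything else is routine.
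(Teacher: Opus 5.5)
Your proof is correct. The paper gives no argument of its own and simply quotes the lemma as a standard fact from the authors' earlier work, so your conjugation computation is exactly the standard justification; equivalently, $C_G(A)$ and $C_G(L/A)$ are the kernels of the restriction $G\to GL(A)$ and of the induced action $G\to GL(L/A)$.

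Your insistence that $A$ be $G$-invariant in the second part matches the paper's setup, where $C_G(L/A)$ is only defined for a $G$-invariant subspace that is also an ideal. The one superfluous step is the appeal to finite-dimensionality: $g(A)=A$ already follows from applying invariance to $g^{-1}\in G$.
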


Fix a basis $a_1,a_2,a_3$ of $L$.  By $\Xi$ we denote the canonical monomorphism of $\Aut(L)$ into $GL_3(\F)$ determined by this basis.  Thus, the $j$-th column of $\Xi(f)$ consists of the coordinates of $f(a_j)$.  All products that are not explicitly displayed are zero.  We write $\F^{+}$ for the additive group of $\F$ and $\F^{\times}$ for its multiplicative group.

The three-dimensional non-Lie algebras used in this paper are listed in Table~\ref{tab:status}.  The table simultaneously records the literature status of their automorphism groups.  The parameter identifications displayed in the second column are part of the classification and prevent repetitions of isomorphic algebras.

\begingroup
\small
\setlength{\tabcolsep}{3pt}
\renewcommand{\arraystretch}{1.18}
\begin{longtable}{@{}P{0.09\textwidth}P{0.415\textwidth}P{0.24\textwidth}P{0.20\textwidth}@{}}
\caption{The refined sixteen-type list and the status of the automorphism problem.}\label{tab:status}\\
\toprule
Type & Nonzero products and restrictions & Previous description of $\Aut(L)$ & Status in the present paper\\
\midrule
\endfirsthead
\toprule
Type & Nonzero products and restrictions & Previous description of $\Aut(L)$ & Status in the present paper\\
\midrule
\endhead
\midrule
\multicolumn{4}{r}{\emph{Continued on next page}}\\
\endfoot
\bottomrule
\endlastfoot
$L_1$ & $[a_1,a_1]=a_3$ & \cite{AutLowDims,KaygorodovLopatin} & previously known\\
$L_2$ & $[a_1,a_1]=[a_1,a_2]=a_3$ & \cite{AutLei3,AutNilpotent,KaygorodovLopatin}; an isomorphic presentation is used in part of the literature & previously known\\
$L_3(\alpha)$ & $[a_1,a_1]=[a_2,a_1]=a_3$, $[a_1,a_2]=\alpha a_3$; $\alpha\ne0,-1$, $\alpha\sim\alpha^{-1}$ & \cite{KaygorodovLopatin}, over $\mathbb C$ & extended to arbitrary $\F$\\
$L_4$ & $[a_1,a_1]=[a_2,a_1]=a_3$, $[a_1,a_2]=-a_3$ & \cite{KaygorodovLopatin}, over $\mathbb C$ & extended to arbitrary $\F$\\
$L_5(\beta)$ & $[a_1,a_1]=a_3$, $[a_2,a_2]=\beta a_3$; $X^2+\beta$ has no root; $\beta$ modulo $(\Fx)^2$ & \cite{AutAnisotropic} & previously known\\
$L_6(\delta)$ & $[a_1,a_1]=[a_1,a_2]=a_3$, $[a_2,a_2]=\delta a_3$; $X^2+X+\delta$ has no root & no complete arbitrary-field description located & obtained here\\
$L_7$ & $[a_1,a_1]=a_3$, $[a_1,a_2]=a_2$, $[a_2,a_1]=-a_2$ & \cite{KaygorodovLopatin}, over $\mathbb C$ & extended to arbitrary $\F$\\
$L_8(\lambda)$ & $\operatorname{char}\F=2$; $[a_1,a_1]=\lambda a_3$, $[a_1,a_2]=[a_2,a_1]=a_2$, $[a_2,a_2]=a_3$; $\lambda'=(\lambda+t^2)/s^2$ for $t\in\F$, $s\in\Fx$ & no complete description located & obtained here\\
$L_9$ & $[a_1,a_1]=[a_1,a_3]=a_3$ & \cite{DiBartolo} if $\operatorname{char}\F\ne2$; \cite{KaygorodovLopatin} over $\mathbb C$ & characteristic $2$ completed here\\
$L_{10}(r)$ & $r\in\Fx$; $[a_1,a_1]=a_3$, $[a_1,a_2]=a_2$, $[a_2,a_1]=-a_2$, $[a_1,a_3]=r a_3$ & \cite{KaygorodovLopatin}, over $\mathbb C$ & extended to arbitrary $\F$ and all $r$\\
$L_{11}$ & $\operatorname{char}\F\ne2$; $[a_2,a_2]=a_3$, $[a_1,a_2]=a_2$, $[a_2,a_1]=-a_2$, $[a_1,a_3]=2a_3$ & \cite{KaygorodovLopatin}, over $\mathbb C$ & obtained for arbitrary $\F$, $\operatorname{char}\F\ne2$\\
$L_{12}$ & $[a_1,a_1]=a_2$, $[a_1,a_2]=a_3$ & \cite{AutCyclic2022,AutCyclic,AutLowDims,KaygorodovLopatin} & previously known\\
$L_{13}$ & $[a_1,a_1]=a_2$, $[a_1,a_2]=a_2+a_3$ & \cite{AutCyclic2022,AutCyclic,AutNonNilpotent,KaygorodovLopatin}; different presentations occur & previously known\\
$L_{14}(\rho)$ & $\rho\in\Fx$; $[a_1,a_1]=[a_1,a_2]=a_2$, $[a_1,a_3]=\rho a_3$; $\rho\sim\rho^{-1}$ & \cite{AutNonNilpotent2026} for $\rho=1$; \cite{KaygorodovLopatin} over $\mathbb C$ & general parameter completed here\\
$L_{15}$ & $[a_1,a_1]=a_2$, $[a_1,a_2]=a_2+a_3$, $[a_1,a_3]=a_3$ & \cite{AutCyclic2022,AutCyclic,AutNonNilpotent2026,KaygorodovLopatin}; different presentation in \cite{AutNonNilpotent2026} & previously known\\
$L_{16}(\beta,\gamma)$ & $[a_1,a_1]=a_2$, $[a_1,a_2]=a_3$, $[a_1,a_3]=\beta a_2+\gamma a_3$; $X^2-\gamma X-\beta$ irreducible; $(\beta,\gamma)\sim(c^2\beta,c\gamma)$ & one-generated type-II structure in \cite{AutCyclic2022,AutCyclic} & explicit $3\times3$ form obtained here\\
\end{longtable}
\endgroup

\begin{remark}[Changes of basis in previously published cases]\label{rem:basis}
Some papers cited in Table~\ref{tab:status} use multiplication tables different from the representatives fixed here.  We record a few changes of basis that will be useful when comparing formulas.

If a published representative of $L_2$ is written as
\[
 [e_1,e_1]=[e_2,e_1]=e_3,
\]
then
\[
 a_1=e_1,\qquad a_2=e_1-e_2,\qquad a_3=e_3
\]
gives the table used here.

A common presentation leading to $L_{13}$ is
\[
 [e_1,e_1]=e_3,\qquad [e_1,e_2]=e_2+\lambda e_3.
\]
For any $\lambda\in\F$, the basis
\[
 a_1=e_1+e_2,\qquad
 a_2=e_2+(\lambda+1)e_3,\qquad
 a_3=-e_3
\]
gives $[a_1,a_1]=a_2$ and $[a_1,a_2]=a_2+a_3$.

The algebra $L_{14}(1)$ is written in~\cite{AutNonNilpotent2026} in the equivalent form
\[
 [e_1,e_1]=[e_1,e_3]=e_3,\qquad [e_1,e_2]=e_2;
\]
one only interchanges $e_2$ and $e_3$.

Finally, a presentation used for $L_{15}$ is
\[
 [e_1,e_1]=[e_1,e_3]=e_3,\qquad [e_1,e_2]=e_2+\lambda e_3,
 \qquad \lambda\ne0.
\]
The change
\[
 a_1=e_1+e_2-(1+\lambda)e_3,\qquad
 a_2=e_2,\qquad
 a_3=\lambda e_3
\]
gives the table of $L_{15}$ in Table~\ref{tab:status}.
\end{remark}

\section{Automorphism groups in the case \texorpdfstring{$\dim_{\F}\Leib(L)=1$}{dim Leib(L)=1} and \texorpdfstring{$L$}{L} is nilpotent}

The automorphism groups of $L_1$, $L_2$, and $L_5(\beta)$ were described previously; see Table~\ref{tab:status}.  Therefore, in this section we consider only $L_3(\alpha)$, $L_4$, and $L_6(\delta)$.

\subsection{The Leibniz algebra \texorpdfstring{$L_3(\alpha)$}{L3(alpha)}}

\begin{theorem}\label{thm:L3}
Let $L=L_3(\alpha)$, where $\alpha\ne0,-1$, and let $G=\Aut(L)$.

If $\alpha\ne1$, then $\Xi(G)$ consists of all nondegenerate matrices of the form
\begin{equation}\label{eq:AutL3generic}
\begin{pmatrix}
 \alpha_1&0&0\\[1mm]
 \dfrac{\beta_2-\alpha_1}{1+\alpha}&\beta_2&0\\[2mm]
 \alpha_3&\beta_3&\alpha_1\beta_2
\end{pmatrix},
\qquad
\alpha_1,\beta_2\in\F^{\times},\quad \alpha_3,\beta_3\in\F.
\end{equation}
Moreover, $G$ contains a normal subgroup isomorphic to $\F^{+}\times\F^{+}$ and a subgroup isomorphic to $\F^{\times}\times\F^{\times}$, and $G$ is their semidirect product.

If $\alpha=1$, then $\operatorname{char}\F\ne2$ and, in addition to the matrices in~\eqref{eq:AutL3generic}, the group $\Xi(G)$ contains all nondegenerate matrices of the form
\begin{equation}\label{eq:AutL3exceptional}
\begin{pmatrix}
 \alpha_1&2\alpha_1&0\\
 \alpha_2&-\alpha_1&0\\
 \alpha_3&\beta_3&\alpha_1(\alpha_1+2\alpha_2)
\end{pmatrix},
\qquad
\alpha_1(\alpha_1+2\alpha_2)\ne0.
\end{equation}
In this case $G$ is a semidirect product of a normal subgroup isomorphic to $\F^{+}\times\F^{+}$ and a subgroup which is a semidirect product of $\F^{\times}\times\F^{\times}$ by a group of order two.  The nontrivial element of the latter group interchanges the two multiplicative factors.
\end{theorem}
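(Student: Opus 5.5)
The plan is to reduce the automorphism condition to a bilinear-form problem on the two-dimensional quotient $L/\F a_3$. First I will check that $\Leib(L)=[L,L]=\F a_3$. For $x=x_1a_1+x_2a_2+x_3a_3$ we get $[x,x]=(x_1^2+(1+\alpha)x_1x_2)a_3$, and $a_3$ annihilates everything. So by Lemma~\ref{lem:characteristic} every $f\in G$ satisfies $f(a_3)=\gamma a_3$ with $\gamma\ne0$. Write $f(a_1)=\alpha_1a_1+\alpha_2a_2+\alpha_3a_3$ and $f(a_2)=\beta_1a_1+\beta_2a_2+\beta_3a_3$, and let $P=\begin{pmatrix}\alpha_1&\beta_1\\ \alpha_2&\beta_2\end{pmatrix}$. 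The bracket is $[x,y]=B(\bar x,\bar y)a_3$, where $B$ has Gram matrix $M=\begin{pmatrix}1&\alpha\\1&0\end{pmatrix}$ in the basis $\bar a_1,\bar a_2$. Hence $f$ is an automorphism if and only if $P^{T}MP=\gamma M$ and $\det P\ne0$, with $\alpha_3,\beta_3$ arbitrary. So the whole problem is to solve these four scalar equations.

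I will solve them by cases on the $(2,2)$ entry, which reads $\beta_1\bigl(\beta_1+(1+\alpha)\beta_2\bigr)=0$. Conceptually, $f$ must permute the two isotropic lines of the quadratic form $q(x)=x_1^2+(1+\alpha)x_1x_2$. These lines are $\F\bar a_2$ and $\F\bar w$ with $w=-(1+\alpha)a_1+a_2$, and they are distinct because $\alpha\ne-1$.

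In the case $\beta_1=0$, the line $\F\bar a_2$ is fixed. The $(1,2)$ entry gives $\alpha\alpha_1\beta_2=\alpha\gamma$, so $\gamma=\alpha_1\beta_2$ because $\alpha\ne0$. The $(2,1)$ entry is then automatic. The $(1,1)$ entry $\alpha_1^2+(1+\alpha)\alpha_1\alpha_2=\alpha_1\beta_2$ forces $\alpha_2=(\beta_2-\alpha_1)/(1+\alpha)$. This is exactly \eqref{eq:AutL3generic}, and conversely every such matrix works.

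In the case $\beta_1\ne0$, the two lines are swapped. To rule this out for $\alpha\ne1$, I will use a scaling-invariant ratio. For any $u$ with $B(u,a_2)\ne0$ we have $B(a_2,u)/B(u,a_2)=1/\alpha$. For $w$ the analogous ratio $B(w,u)/B(u,w)$, evaluated at $u=a_2$, equals $\alpha$; it is independent of $u$ because $w$ is isotropic. Since $P^TMP=\gamma M$ preserves these ratios, a swap forces $\alpha=\alpha^{-1}$, that is, $\alpha=1$. When $\alpha=1$ the condition $\alpha\ne-1$ forces $\operatorname{char}\F\ne2$. Substituting $\beta_1=-2\beta_2$ into the remaining equations should give $\beta_1=2\alpha_1$, $\beta_2=-\alpha_1$ and $\gamma=\alpha_1(\alpha_1+2\alpha_2)$, which is \eqref{eq:AutL3exceptional}. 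I expect this substitution, together with checking that no further solutions appear, to be the most computation-heavy step, but it is routine.

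For the group structure, I take the kernel of the restriction map $G\to GL(L/\F a_3)$. This kernel is $C_G(L/\Leib(L))$, which is normal by Lemma~\ref{lem:normal}. Its elements have $\alpha_1=\beta_2=1$ and $\gamma=1$, so the kernel consists of the matrices with free $\alpha_3,\beta_3$, which multiply additively; this gives $\F^+\times\F^+$. The complement consists of the matrices with $\alpha_3=\beta_3=0$. In the generic case $f$ acts on the two invariant lines $\F\bar a_2$ and $\F\bar w$ by the scalars $\beta_2$ and $\alpha_1$, since $\bar w$ is an eigenvector with eigenvalue $\alpha_1$. This gives an isomorphism of the complement with $\F^\times\times\F^\times$. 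For $\alpha=1$, fix one swapping involution, for example the one with $\alpha_1=1$ and $\alpha_2=0$. Its square should be the identity on $L/\F a_3$, and conjugation by it exchanges the two eigenvalue characters, which yields the stated semidirect product with a group of order two.
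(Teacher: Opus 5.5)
Your argument is correct, and its skeleton is the same as the paper's: you reduce modulo $\F a_3$, split into cases on $\beta_1$, take $C_G(L/\F a_3)$ as the normal subgroup, and use the same involution $\tau$ when $\alpha=1$. The difference lies in the two places where the paper computes.

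To exclude $\beta_1\neq0$ for $\alpha\neq1$, the paper eliminates variables. After $\beta_1=-(1+\alpha)\beta_2$, the $(1,1)$ and $(2,1)$ equations give $\alpha_1=-\alpha\beta_2$, and substituting into the $(1,2)$ equation yields $(\alpha-1)(\alpha+1)=0$. You instead observe that for an isotropic $v$ the ratio $B(v,u)/B(u,v)$ does not depend on $u$, because both functionals vanish on $v$ in a plane. This ratio is also unchanged by any $P$ with $P^{T}MP=\gamma M$. Since it equals $\alpha^{-1}$ on $\F\bar a_2$ and $\alpha$ on $\F\bar w$, a swap of the two lines forces $\alpha^2=1$. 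Your version needs only the off-diagonal equations, and it explains why $\alpha=1$ is exceptional, in line with the identification $\alpha\sim\alpha^{-1}$ in the classification.

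Similarly, the paper checks $D\cong\F^{\times}\times\F^{\times}$ and the action of $\tau$ by explicit matrix products. You read both off from the eigenvalue characters $f\mapsto(\alpha_1,\beta_2)$ on $\F\bar w$ and $\F\bar a_2$, and from the fact that $\tau$ exchanges these lines. Your complement, the set of all block-diagonal automorphisms, is exactly the paper's $D$, resp.\ $D\langle\tau\rangle$. In exchange, the paper's route produces explicit formulas, such as the conjugation action of $D$ on $C$.

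The step you left as ``should give'' does go through. With $\alpha=1$ and $\beta_1=-2\beta_2$, the $(1,1)$ and $(1,2)$ entries give $\gamma=\alpha_1(\alpha_1+2\alpha_2)=-\beta_2(\alpha_1+2\alpha_2)$. Since $\det P=\beta_2(\alpha_1+2\alpha_2)\neq0$, this forces $\alpha_1=-\beta_2$. Because your reduction to $P^{T}MP=\gamma M$ is an equivalence, sufficiency of the exceptional family then follows immediately.
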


\begin{proof}
Let $f\in G$.  By Lemma~\ref{lem:characteristic},
\[
 f(\Leib(L))=\Leib(L)=\F a_3.
\]
Hence
\[
\begin{aligned}
 f(a_1)&=\alpha_1a_1+\alpha_2a_2+\alpha_3a_3,\\
 f(a_2)&=\beta_1a_1+\beta_2a_2+\beta_3a_3,\\
 f(a_3)&=\gamma_3a_3.
\end{aligned}
\]
Since $f$ is an automorphism,
\[
 \gamma_3\ne0,
 \qquad
 \alpha_1\beta_2-\alpha_2\beta_1\ne0.
\]
We now use the defining products of $L$ successively.

First,
\[
\begin{aligned}
 \gamma_3a_3
 &=f(a_3)=f([a_1,a_1])=[f(a_1),f(a_1)]\\
 &=[\alpha_1a_1+\alpha_2a_2+\alpha_3a_3,
    \alpha_1a_1+\alpha_2a_2+\alpha_3a_3]\\
 &=\alpha_1^2[a_1,a_1]
   +\alpha_1\alpha_2[a_1,a_2]
   +\alpha_2\alpha_1[a_2,a_1]\\
 &=\bigl(\alpha_1^2+(1+\alpha)\alpha_1\alpha_2\bigr)a_3.
\end{aligned}
\]
Therefore
\begin{equation}\label{eq:L3-1}
 \gamma_3=\alpha_1^2+(1+\alpha)\alpha_1\alpha_2.
\end{equation}
Next,
\[
\begin{aligned}
 \gamma_3a_3
 &=f([a_2,a_1])=[f(a_2),f(a_1)]\\
 &=[\beta_1a_1+\beta_2a_2+\beta_3a_3,
    \alpha_1a_1+\alpha_2a_2+\alpha_3a_3]\\
 &=\beta_1\alpha_1[a_1,a_1]
   +\beta_1\alpha_2[a_1,a_2]
   +\beta_2\alpha_1[a_2,a_1]\\
 &=\bigl(\beta_1\alpha_1+\alpha\beta_1\alpha_2
          +\beta_2\alpha_1\bigr)a_3,
\end{aligned}
\]
so
\begin{equation}\label{eq:L3-2}
 \gamma_3=\beta_1\alpha_1+\beta_2\alpha_1+\alpha\beta_1\alpha_2.
\end{equation}
Further,
\[
\begin{aligned}
 \alpha\gamma_3a_3
 &=f([a_1,a_2])=[f(a_1),f(a_2)]\\
 &=\alpha_1\beta_1[a_1,a_1]
   +\alpha_1\beta_2[a_1,a_2]
   +\alpha_2\beta_1[a_2,a_1]\\
 &=\bigl(\alpha_1\beta_1+\alpha\alpha_1\beta_2
          +\alpha_2\beta_1\bigr)a_3,
\end{aligned}
\]
and hence
\begin{equation}\label{eq:L3-3}
 \alpha\gamma_3=\alpha_1\beta_1+\alpha\alpha_1\beta_2+\alpha_2\beta_1.
\end{equation}
Finally,
\[
\begin{aligned}
 0=f([a_2,a_2])=[f(a_2),f(a_2)]
 &=\bigl(\beta_1^2+(1+\alpha)\beta_1\beta_2\bigr)a_3.
\end{aligned}
\]
Thus
\begin{equation}\label{eq:L3-4}
 \beta_1\bigl(\beta_1+(1+\alpha)\beta_2\bigr)=0.
\end{equation}

Suppose first that $\beta_1=0$.  The nondegeneracy of the matrix of $f$ implies that
$\alpha_1\ne0$ and $\beta_2\ne0$.  Equality~\eqref{eq:L3-2} gives
\[
 \gamma_3=\alpha_1\beta_2.
\]
Combining this equality with~\eqref{eq:L3-1}, we obtain
\[
 \alpha_1\beta_2
 =\alpha_1^2+(1+\alpha)\alpha_1\alpha_2.
\]
Since $\alpha_1\ne0$ and $1+\alpha\ne0$, it follows that
\[
 \alpha_2=\frac{\beta_2-\alpha_1}{1+\alpha}.
\]
Substitution in~\eqref{eq:L3-3} gives an identity.  Therefore $\Xi(f)$ has the form~\eqref{eq:AutL3generic}.

Suppose now that $\beta_1\ne0$.  By~\eqref{eq:L3-4},
\begin{equation}\label{eq:L3-beta1}
 \beta_1=-(1+\alpha)\beta_2.
\end{equation}
In particular, $\beta_2\ne0$.  Moreover,
\[
 \alpha_1\beta_2-\alpha_2\beta_1
 =\beta_2\bigl(\alpha_1+(1+\alpha)\alpha_2\bigr)\ne0.
\]
Using~\eqref{eq:L3-beta1} in~\eqref{eq:L3-1} and~\eqref{eq:L3-2}, we get
\[
 \gamma_3=\alpha_1\bigl(\alpha_1+(1+\alpha)\alpha_2\bigr)
          =-\alpha\beta_2\bigl(\alpha_1+(1+\alpha)\alpha_2\bigr).
\]
The last factor is nonzero, and hence
\[
 \alpha_1=-\alpha\beta_2.
\]
Substituting this equality and~\eqref{eq:L3-beta1} into~\eqref{eq:L3-3}, we obtain
\[
 \beta_2(\alpha-1)(\alpha+1)
 \bigl(\alpha_1+(1+\alpha)\alpha_2\bigr)=0.
\]
Since $\beta_2\ne0$, $\alpha\ne-1$, and the last factor is nonzero, we have $\alpha=1$.
Consequently $\operatorname{char}\F\ne2$.  Moreover,
\[
 \alpha_1=-\beta_2,
 \qquad
 \beta_1=2\alpha_1,
\]
and the matrix of $f$ has the form~\eqref{eq:AutL3exceptional}.  Its determinant is
\[
 -\alpha_1^2(\alpha_1+2\alpha_2)^2,
\]
so the condition in~\eqref{eq:AutL3exceptional} is exactly the nondegeneracy condition.

We verify the converse.  By bilinearity it is enough to check the products of the basis elements.  For a matrix of the form~\eqref{eq:AutL3generic}, put
\[
 q=\frac{\beta_2-\alpha_1}{1+\alpha}.
\]
Then
\[
\begin{aligned}
 [f(a_1),f(a_1)]
 &=\bigl(\alpha_1^2+(1+\alpha)\alpha_1q\bigr)a_3
 =\alpha_1\beta_2a_3=f(a_3),\\
 [f(a_2),f(a_1)]
 &=\alpha_1\beta_2a_3=f(a_3),\\
 [f(a_1),f(a_2)]
 &=\alpha\alpha_1\beta_2a_3=\alpha f(a_3),\\
 [f(a_2),f(a_2)]&=0.
\end{aligned}
\]
All products containing $f(a_3)$ are zero because $a_3$ belongs to the left and right center of this algebra.  Hence every nondegenerate matrix in~\eqref{eq:AutL3generic} defines an automorphism.

Now let $\alpha=1$ and let $f$ have a matrix of the form~\eqref{eq:AutL3exceptional}.  Thus
\[
\begin{aligned}
 f(a_1)&=\alpha_1a_1+\alpha_2a_2+\alpha_3a_3,\\
 f(a_2)&=2\alpha_1a_1-\alpha_1a_2+\beta_3a_3,\\
 f(a_3)&=\alpha_1(\alpha_1+2\alpha_2)a_3.
\end{aligned}
\]
Using
\[
 [a_1,a_1]=[a_2,a_1]=[a_1,a_2]=a_3,
 \qquad [a_2,a_2]=0,
\]
we obtain
\[
\begin{aligned}
 [f(a_1),f(a_1)]
 &=\bigl(\alpha_1^2+2\alpha_1\alpha_2\bigr)a_3
 =f(a_3),\\
 [f(a_2),f(a_1)]
 &=\bigl(2\alpha_1^2+2\alpha_1\alpha_2-\alpha_1^2\bigr)a_3
 =f(a_3),\\
 [f(a_1),f(a_2)]
 &=\bigl(2\alpha_1^2-\alpha_1^2+2\alpha_1\alpha_2\bigr)a_3
 =f(a_3),\\
 [f(a_2),f(a_2)]
 &=\bigl(4\alpha_1^2-2\alpha_1^2-2\alpha_1^2\bigr)a_3=0.
\end{aligned}
\]
Again all products containing $f(a_3)$ are zero.  Therefore every nondegenerate matrix in~\eqref{eq:AutL3exceptional} also defines an automorphism.

It remains to describe the group structure.  Put
\[
 C=C_G(L/\F a_3).
\]
Then $C$ consists of all automorphisms whose matrices are
\[
 \begin{pmatrix}
 1&0&0\\0&1&0\\ r&u&1
 \end{pmatrix},
 \qquad r,u\in\F.
\]
By Lemma~\ref{lem:normal}, $C\trianglelefteq G$.  Moreover,
\[
 \begin{pmatrix}1&0&0\\0&1&0\\r&u&1\end{pmatrix}
 \begin{pmatrix}1&0&0\\0&1&0\\s&v&1\end{pmatrix}
 =
 \begin{pmatrix}1&0&0\\0&1&0\\r+s&u+v&1\end{pmatrix}.
\]
Hence the correspondence of this matrix to $(r,u)$ is an isomorphism
\[
 C\cong\F^{+}\times\F^{+}.
\]

Assume first that $\alpha\ne1$.  Let $D$ be the set of all automorphisms whose matrices have the form
\[
 \begin{pmatrix}
 \lambda&0&0\\[1mm]
 \dfrac{\mu-\lambda}{1+\alpha}&\mu&0\\[2mm]
 0&0&\lambda\mu
 \end{pmatrix},
 \qquad \lambda,\mu\in\F^{\times}.
\]
A direct multiplication gives
\[
\begin{pmatrix}
 \lambda&0&0\\ \dfrac{\mu-\lambda}{1+\alpha}&\mu&0\\0&0&\lambda\mu
\end{pmatrix}
\begin{pmatrix}
 \lambda'&0&0\\ \dfrac{\mu'-\lambda'}{1+\alpha}&\mu'&0\\0&0&\lambda'\mu'
\end{pmatrix}
=
\begin{pmatrix}
 \lambda\lambda'&0&0\\
 \dfrac{\mu\mu'-\lambda\lambda'}{1+\alpha}&\mu\mu'&0\\
 0&0&\lambda\lambda'\mu\mu'
\end{pmatrix}.
\]
Thus $D$ is a subgroup and the correspondence of this matrix to $(\lambda,\mu)$ is an isomorphism
$D\cong\F^{\times}\times\F^{\times}$.  Every matrix in~\eqref{eq:AutL3generic} is a product of an element of $C$ and an element of $D$, and $C\cap D=\{1\}$.  Therefore
\[
 G=C\rtimes D.
\]
For completeness, direct conjugation gives
\[
\begin{pmatrix}
 \lambda&0&0\\ \dfrac{\mu-\lambda}{1+\alpha}&\mu&0\\0&0&\lambda\mu
\end{pmatrix}
\begin{pmatrix}1&0&0\\0&1&0\\r&u&1\end{pmatrix}
\begin{pmatrix}
 \lambda&0&0\\ \dfrac{\mu-\lambda}{1+\alpha}&\mu&0\\0&0&\lambda\mu
\end{pmatrix}^{-1}
=
\begin{pmatrix}
1&0&0\\0&1&0\\
\mu r-\dfrac{\mu-\lambda}{1+\alpha}u&\lambda u&1
\end{pmatrix}.
\]

Finally let $\alpha=1$.  We use the same subgroup $D$ and consider
\[
 \tau=\begin{pmatrix}1&2&0\\0&-1&0\\0&0&1\end{pmatrix}.
\]
Then $\tau^2=1$.  Direct multiplication shows that
\[
 \tau
 \begin{pmatrix}
 \lambda&0&0\\ \dfrac{\mu-\lambda}{2}&\mu&0\\0&0&\lambda\mu
 \end{pmatrix}
 \tau^{-1}
 =
 \begin{pmatrix}
 \mu&0&0\\ \dfrac{\lambda-\mu}{2}&\lambda&0\\0&0&\lambda\mu
 \end{pmatrix}.
\]
Thus conjugation by $\tau$ interchanges the two multiplicative parameters.  Put
\[
 H=\langle D,\tau\rangle.
\]
Since $\tau$ normalizes $D$, we have $D\trianglelefteq H$, $D\cap\langle\tau\rangle=\{1\}$ and
\[
 H=D\langle\tau\rangle.
\]
Hence $H$ is a semidirect product of $D\cong\F^{\times}\times\F^{\times}$ by the group $\langle\tau\rangle$ of order two.

It remains only to check that this subgroup is a complement to $C$.  Let an automorphism from the second family~\eqref{eq:AutL3exceptional} be given.  Put
\[
 \lambda=\alpha_1,
 \qquad
 \mu=\alpha_1+2\alpha_2.
\]
Then $\lambda,\mu\ne0$, and
\[
 \begin{pmatrix}
 \lambda&0&0\\[1mm]
 \dfrac{\mu-\lambda}{2}&\mu&0\\[2mm]
 0&0&\lambda\mu
 \end{pmatrix}\tau
 =
 \begin{pmatrix}
 \alpha_1&2\alpha_1&0\\
 \alpha_2&-\alpha_1&0\\
 0&0&\alpha_1(\alpha_1+2\alpha_2)
 \end{pmatrix}.
\]
Since $\alpha_1(\alpha_1+2\alpha_2)\ne0$, there exist unique $r,u\in\F$ such that
\[
 \alpha_1r+\alpha_2u=\alpha_3,
 \qquad
 2\alpha_1r-\alpha_1u=\beta_3.
\]
Therefore every matrix in~\eqref{eq:AutL3exceptional} belongs to $CH$.  The matrices in~\eqref{eq:AutL3generic} already belong to $CD\le CH$, so $G=CH$.  From the displayed forms of $C$ and $H$ we also have $C\cap H=\{1\}$.  Since $C\trianglelefteq G$,
\[
 G=C\rtimes H,
\]
where $H$ is the semidirect product described above.  This completes the proof.
\end{proof}

\subsection{The Leibniz algebra \texorpdfstring{$L_4$}{L4}}

\begin{theorem}\label{thm:L4}
Let $L=L_4$ and $G=\Aut(L)$.  Then $\Xi(G)$ consists of all matrices
\begin{equation}\label{eq:AutL4}
\begin{pmatrix}
 \alpha_1&0&0\\
 \alpha_2&\alpha_1&0\\
 \alpha_3&\beta_3&\alpha_1^2
\end{pmatrix},
\qquad
\alpha_1\in\F^{\times},\quad \alpha_2,\alpha_3,\beta_3\in\F.
\end{equation}
This formula is valid in every characteristic.  Moreover, $G$ is a semidirect product of a normal subgroup isomorphic to $\F^{+}\times\F^{+}$ and a subgroup isomorphic to $\F^{+}\times\F^{\times}$.
\end{theorem}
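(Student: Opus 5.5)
I will follow the scheme of the proof of Theorem~\ref{thm:L3}, since $L_4$ is formally $L_3(\alpha)$ at the excluded value $\alpha=-1$. By Lemma~\ref{lem:characteristic}, $f(\Leib(L))=\Leib(L)=\F a_3$. So I write $f(a_1)=\alpha_1a_1+\alpha_2a_2+\alpha_3a_3$, $f(a_2)=\beta_1a_1+\beta_2a_2+\beta_3a_3$ and $f(a_3)=\gamma_3a_3$, with $\gamma_3\ne0$ and $\alpha_1\beta_2-\alpha_2\beta_1\ne0$. Since $a_3$ lies in the left and right center, only the four products $[a_i,a_j]$ with $i,j\in\{1,2\}$ give conditions. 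With $[a_1,a_1]=[a_2,a_1]=a_3$ and $[a_1,a_2]=-a_3$, the symmetric contributions cancel, and we get:
\[
\gamma_3=\alpha_1^2,\qquad
\gamma_3=\beta_1\alpha_1-\beta_1\alpha_2+\beta_2\alpha_1,\qquad
-\gamma_3=\alpha_1\beta_1-\alpha_1\beta_2+\alpha_2\beta_1,\qquad
0=\beta_1^2 .
\]

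The last equation forces $\beta_1=0$, and this holds in every characteristic, so no case split is needed. Nondegeneracy then gives $\alpha_1\beta_2\ne0$. The second equation gives $\gamma_3=\alpha_1\beta_2$, and with the first this yields $\beta_2=\alpha_1$. The third equation becomes $-\alpha_1^2=-\alpha_1\beta_2$, which is already satisfied. So $\alpha_2$ is free, and $\Xi(f)$ has the form~\eqref{eq:AutL4}. For the converse, I check the four products directly: $[f(a_1),f(a_1)]=\alpha_1^2a_3$, $[f(a_2),f(a_1)]=\alpha_1^2a_3$, $[f(a_1),f(a_2)]=-\alpha_1^2a_3$ and $[f(a_2),f(a_2)]=0$. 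Nondegeneracy is automatic because the determinant is $\alpha_1^4$. None of these steps uses a characteristic assumption, which justifies the claim that the formula holds in every characteristic.

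For the group structure, I again take $C=C_G(L/\F a_3)$, which is normal by Lemma~\ref{lem:normal}. It consists of the matrices with $\alpha_1=1$ and $\alpha_2=0$, and it is isomorphic to $\F^{+}\times\F^{+}$ via $(\alpha_3,\beta_3)$, exactly as in Theorem~\ref{thm:L3}. As a complement I take
\[
D=\left\{\begin{pmatrix}\lambda&0&0\\ t&\lambda&0\\0&0&\lambda^2\end{pmatrix}\;\middle|\;\lambda\in\Fx,\ t\in\F\right\}.
\]
Multiplying two such matrices gives $(\lambda,t)(\lambda',t')=(\lambda\lambda',\,t\lambda'+\lambda t')$. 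So $D$ is a subgroup, but it is not obviously a direct product.

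This is the one real obstacle: I need $D\cong\F^{+}\times\F^{\times}$. I would reparametrize by $s=t/\lambda$. Then the law becomes $(\lambda,s)(\lambda',s')=(\lambda\lambda',\,s+s')$, so the upper-left $2\times2$ block is $\lambda(I+sN)$ with $N$ nilpotent and commuting with scalars. The map $(\lambda,s)\mapsto$ matrix is then an isomorphism from $\Fx\times\F^{+}$. Finally, every matrix in~\eqref{eq:AutL4} factors as $cd$ with $c\in C$ and $d\in D$: $d$ is determined by $(\alpha_1,\alpha_2)$, and $c$ is obtained by solving a triangular linear system for the bottom row, which is uniquely solvable since $\alpha_1\ne0$. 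Also $C\cap D=\{1\}$, so $G=C\rtimes D$.
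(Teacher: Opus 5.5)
Your proposal is correct and follows essentially the same route as the paper: the same four scalar conditions ($\beta_1=0$ from $[a_2,a_2]=0$, $\gamma_3=\alpha_1^2$, $\beta_2=\alpha_1$ from $[a_2,a_1]=a_3$, and $[a_1,a_2]=-a_3$ automatic), the same normal subgroup $C=C_G(L/\F a_3)\cong\F^{+}\times\F^{+}$, and the same complement $D$. The paper writes the $(2,1)$-entry of $D$ directly as $c\lambda$, which is exactly your substitution $s=t/\lambda$ giving $D\cong\F^{+}\times\F^{\times}$.
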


\begin{proof}
Let $f\in G$.  Since $\Leib(L)=[L,L]=\F a_3$ is invariant under every automorphism, we may write
\[
\begin{aligned}
 f(a_1)&=\alpha_1a_1+\alpha_2a_2+\alpha_3a_3,\\
 f(a_2)&=\beta_1a_1+\beta_2a_2+\beta_3a_3,\\
 f(a_3)&=\gamma_3a_3.
\end{aligned}
\]
The matrix of $f$ is nondegenerate, so
\[
 \gamma_3\ne0,
 \qquad
 \alpha_1\beta_2-\alpha_2\beta_1\ne0.
\]
The equality $[a_2,a_2]=0$ gives
\[
\begin{aligned}
0&=[f(a_2),f(a_2)]\\
 &=\beta_1^2[a_1,a_1]
   +\beta_1\beta_2[a_1,a_2]
   +\beta_2\beta_1[a_2,a_1]\\
 &=\beta_1^2a_3.
\end{aligned}
\]
Hence $\beta_1=0$.  Consequently $\alpha_1\ne0$ and $\beta_2\ne0$.
Next,
\[
\begin{aligned}
 \gamma_3a_3
 &=f([a_1,a_1])=[f(a_1),f(a_1)]\\
 &=\alpha_1^2a_3
   -\alpha_1\alpha_2a_3
   +\alpha_2\alpha_1a_3
 =\alpha_1^2a_3,
\end{aligned}
\]
so $\gamma_3=\alpha_1^2$.  Also
\[
\begin{aligned}
 \gamma_3a_3
 &=f([a_2,a_1])=[f(a_2),f(a_1)]
 =\beta_2\alpha_1a_3.
\end{aligned}
\]
Thus $\beta_2=\alpha_1$.  Finally,
\[
 [f(a_1),f(a_2)]=-\alpha_1^2a_3=-f(a_3),
\]
so the relation $[a_1,a_2]=-a_3$ is automatically satisfied.  No condition is imposed on $\alpha_2,\alpha_3,\beta_3$.  We have therefore obtained exactly the matrices in~\eqref{eq:AutL4}.

Conversely, let $f$ be a nondegenerate linear transformation having the matrix~\eqref{eq:AutL4}.  Direct calculation gives
\[
\begin{aligned}
 [f(a_1),f(a_1)]&=\alpha_1^2a_3=f(a_3),\\
 [f(a_2),f(a_1)]&=\alpha_1^2a_3=f(a_3),\\
 [f(a_1),f(a_2)]&=-\alpha_1^2a_3=-f(a_3),\\
 [f(a_2),f(a_2)]&=0.
\end{aligned}
\]
Every product involving $f(a_3)$ is zero.  Hence $f$ preserves all products of the basis elements and therefore is an automorphism.  Notice that no division by $2$ occurs, so the same argument is valid in characteristic two.

We now describe the group structure.  Let $C=C_G(L/\F a_3)$.  Then
\[
 \Xi(C)=\left\{
 \begin{pmatrix}1&0&0\\0&1&0\\r&u&1\end{pmatrix}:r,u\in\F
 \right\},
\]
and $C\trianglelefteq G$ by Lemma~\ref{lem:normal}.  Clearly $C\cong\F^{+}\times\F^{+}$.
Let $D$ be the subgroup formed by the matrices
\[
 \begin{pmatrix}
 \lambda&0&0\\c\lambda&\lambda&0\\0&0&\lambda^2
 \end{pmatrix},
 \qquad c\in\F,\quad \lambda\in\F^{\times}.
\]
Multiplying two such matrices, we obtain
\[
\begin{pmatrix}\lambda&0&0\\c\lambda&\lambda&0\\0&0&\lambda^2\end{pmatrix}
\begin{pmatrix}\mu&0&0\\d\mu&\mu&0\\0&0&\mu^2\end{pmatrix}
=
\begin{pmatrix}\lambda\mu&0&0\\(c+d)\lambda\mu&\lambda\mu&0\\0&0&(\lambda\mu)^2\end{pmatrix}.
\]
Thus $D\cong\F^{+}\times\F^{\times}$.  Every element of $G$ can be written as a product of an element of $C$ and an element of $D$, and $C\cap D=\{1\}$.  Consequently
\[
 G=C\rtimes D.
\]
The action is obtained directly from
\[
\begin{pmatrix}\lambda&0&0\\c\lambda&\lambda&0\\0&0&\lambda^2\end{pmatrix}
\begin{pmatrix}1&0&0\\0&1&0\\r&u&1\end{pmatrix}
\begin{pmatrix}\lambda&0&0\\c\lambda&\lambda&0\\0&0&\lambda^2\end{pmatrix}^{-1}
=
\begin{pmatrix}1&0&0\\0&1&0\\\lambda(r-cu)&\lambda u&1\end{pmatrix}.
\]
This proves the theorem.
\end{proof}

\subsection{The Leibniz algebra \texorpdfstring{$L_6(\delta)$}{L6(delta)}}

\begin{theorem}\label{thm:L6}
Let $L=L_6(\delta)$, where the polynomial $X^2+X+\delta$ has no root in $\F$, and let $G=\Aut(L)$.  Then $\Xi(G)$ consists of all matrices
\begin{equation}\label{eq:AutL6}
\begin{pmatrix}
 \alpha_1&-\delta\alpha_2&0\\
 \alpha_2&\alpha_1+\alpha_2&0\\
 \alpha_3&\beta_3&\alpha_1^2+\alpha_1\alpha_2+\delta\alpha_2^2
\end{pmatrix},
\end{equation}
where $\alpha_1,\alpha_2,\alpha_3,\beta_3\in\F$ and $(\alpha_1,\alpha_2)\ne(0,0)$.

Let $E=\F(\theta)$, where $\theta^2-\theta+\delta=0$.  Then $G$ is a semidirect product of a normal subgroup isomorphic to $\F^{+}\times\F^{+}$ and a subgroup isomorphic to the multiplicative group $E^{\times}$.
\end{theorem}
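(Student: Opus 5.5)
The plan is to follow the pattern of Theorems~\ref{thm:L3} and~\ref{thm:L4}, but to solve the automorphism conditions using a quadratic form. By Lemma~\ref{lem:characteristic}, $f(\Leib(L))=\Leib(L)=\F a_3$. So I will write $f(a_1)=\alpha_1a_1+\alpha_2a_2+\alpha_3a_3$, $f(a_2)=\beta_1a_1+\beta_2a_2+\beta_3a_3$ and $f(a_3)=\gamma_3a_3$, with $\gamma_3\ne0$. Since $a_3$ lies in the two-sided center, the bracket of $x=x_1a_1+x_2a_2+x_3a_3$ and $y$ is $B(x,y)a_3$, where
\[
B(x,y)=x_1y_1+x_1y_2+\delta x_2y_2 .
\]
The associated quadratic form is $Q(x_1,x_2)=x_1^2+x_1x_2+\delta x_2^2$. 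It is anisotropic because $X^2+X+\delta$ has no root, and in particular $\delta\ne0$. The four defining relations then become
\[
Q(\alpha_1,\alpha_2)=\gamma_3,\qquad
\beta_1(\alpha_1+\alpha_2)+\delta\alpha_2\beta_2=0,\qquad
\alpha_1\beta_1+\alpha_1\beta_2+\delta\alpha_2\beta_2=\gamma_3,\qquad
Q(\beta_1,\beta_2)=\delta\gamma_3 .
\]

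Key step: the second equation is linear in $(\beta_1,\beta_2)$ with coefficient vector $(\alpha_1+\alpha_2,\delta\alpha_2)$. This vector is nonzero: otherwise $\alpha_2=0$ (as $\delta\ne0$), and then $\alpha_1=0$, which forces $\gamma_3=Q(0,0)=0$. Hence $(\beta_1,\beta_2)=t(-\delta\alpha_2,\alpha_1+\alpha_2)$ for some $t\in\F$. Substituting into the third equation gives $t\,Q(\alpha_1,\alpha_2)=\gamma_3=Q(\alpha_1,\alpha_2)$. Since $Q$ is anisotropic, $Q(\alpha_1,\alpha_2)\ne0$, so $t=1$. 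A direct expansion shows $Q(-\delta\alpha_2,\alpha_1+\alpha_2)=\delta Q(\alpha_1,\alpha_2)$, so the fourth equation holds automatically. This yields exactly the matrices~\eqref{eq:AutL6}. Their determinant is $Q(\alpha_1,\alpha_2)^2$, which is nonzero precisely when $(\alpha_1,\alpha_2)\ne(0,0)$. For the converse I will check the four basis products for such a matrix. The computation is the same identity read backwards, and products involving $a_3$ vanish.

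For the structure, $C=C_G(L/\F a_3)$ again consists of the unitriangular matrices with arbitrary $(r,u)$ in the last row. It is normal by Lemma~\ref{lem:normal} and isomorphic to $\F^{+}\times\F^{+}$. Take $D$ to be the matrices~\eqref{eq:AutL6} with $\alpha_3=\beta_3=0$. Then $G=CD$ with $C\cap D=1$, exactly as for $L_4$, because the lower-left $2\times 2$ block of $C\,D$ can be prescribed freely when the upper $2\times2$ block is invertible. It remains to identify $D\cong E^{\times}$. Here I map the matrix with first column $(\alpha_1,\alpha_2)$ to $\alpha_1+\alpha_2\theta$, or possibly to $\alpha_1+\alpha_2\bar\theta$ with $\bar\theta=1-\theta$, whichever is multiplicative. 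The $2\times2$ block $\begin{pmatrix}\alpha_1&-\delta\alpha_2\\ \alpha_2&\alpha_1+\alpha_2\end{pmatrix}$ is $\alpha_1I+\alpha_2N$ with $N=\begin{pmatrix}0&-\delta\\1&1\end{pmatrix}$. Since $N^2=N-\delta I$, $N$ satisfies $X^2-X+\delta=0$, and $\{\alpha_1I+\alpha_2N\}$ is a field isomorphic to $E$ via $N\mapsto\theta$. The corner entry is $Q(\alpha_1,\alpha_2)=\det(\alpha_1I+\alpha_2N)$, which is multiplicative, so the $3\times3$ matrices multiply compatibly. This gives $D\cong E^{\times}$.

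I expect the main obstacle to be the identification of the $2\times2$ blocks with $E$. The two points to check are that these blocks form a commutative algebra closed under multiplication, which follows from $N^2=N-\delta I$, and that the corner entry matches the norm. Both are routine once $N$ is named. Characteristic two causes no trouble, since no division by $2$ appears anywhere.
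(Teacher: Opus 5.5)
Your proposal is correct and follows essentially the same route as the paper: the same coordinates forced by invariance of $\Leib(L)=\F a_3$, the same four scalar equations with the $[a_2,a_2]$ relation automatic, and the same splitting $G=C\rtimes D$ with $D\cong E^{\times}$. The differences are cosmetic. You solve for $(\beta_1,\beta_2)$ by taking the solution line of the homogeneous equation and then fixing the scalar, where the paper uses Cramer's rule with determinant $\gamma_3$; you identify $D$ via $N^2=N-\delta I$ and multiplicativity of $\det$ rather than by multiplying matrices explicitly; and in $G=CD$ the block to be prescribed is the $1\times 2$ row $(\alpha_3,\beta_3)$, not a $2\times 2$ block.
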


\begin{proof}
Let $f\in G$.  As $\Leib(L)=[L,L]=\F a_3$ is invariant under $f$, we have
\[
\begin{aligned}
 f(a_1)&=\alpha_1a_1+\alpha_2a_2+\alpha_3a_3,\\
 f(a_2)&=\beta_1a_1+\beta_2a_2+\beta_3a_3,\\
 f(a_3)&=\gamma_3a_3.
\end{aligned}
\]
The nondegeneracy of $f$ gives
\[
 \gamma_3\ne0,
 \qquad
 \alpha_1\beta_2-\alpha_2\beta_1\ne0.
\]
From $[a_1,a_1]=a_3$ we obtain
\[
\begin{aligned}
 \gamma_3a_3
 &= [f(a_1),f(a_1)]\\
 &=\alpha_1^2[a_1,a_1]
   +\alpha_1\alpha_2[a_1,a_2]
   +\alpha_2^2[a_2,a_2]\\
 &=\bigl(\alpha_1^2+\alpha_1\alpha_2+\delta\alpha_2^2\bigr)a_3.
\end{aligned}
\]
Thus
\begin{equation}\label{eq:L6-1}
 \gamma_3=\alpha_1^2+\alpha_1\alpha_2+\delta\alpha_2^2.
\end{equation}
Using $[a_2,a_1]=0$, we get
\[
\begin{aligned}
0&=[f(a_2),f(a_1)]\\
 &=\beta_1\alpha_1[a_1,a_1]
   +\beta_1\alpha_2[a_1,a_2]
   +\beta_2\alpha_2[a_2,a_2]\\
 &=\bigl(\beta_1(\alpha_1+\alpha_2)+\delta\alpha_2\beta_2\bigr)a_3,
\end{aligned}
\]
whence
\begin{equation}\label{eq:L6-2}
 \beta_1(\alpha_1+\alpha_2)+\delta\alpha_2\beta_2=0.
\end{equation}
Further,
\[
\begin{aligned}
 \gamma_3a_3
 &=f([a_1,a_2])=[f(a_1),f(a_2)]\\
 &=\bigl(\alpha_1\beta_1+\alpha_1\beta_2+\delta\alpha_2\beta_2\bigr)a_3,
\end{aligned}
\]
so
\begin{equation}\label{eq:L6-3}
 \alpha_1\beta_1+(\alpha_1+\delta\alpha_2)\beta_2=\gamma_3.
\end{equation}
Equations~\eqref{eq:L6-2} and~\eqref{eq:L6-3} form a linear system in $\beta_1,\beta_2$.  Its determinant is
\[
 (\alpha_1+\alpha_2)(\alpha_1+\delta\alpha_2)-\delta\alpha_1\alpha_2
 =\alpha_1^2+\alpha_1\alpha_2+\delta\alpha_2^2
 =\gamma_3.
\]
Since $\gamma_3\ne0$, the system has a unique solution.  Applying Cramer's rule and using that its determinant is $\gamma_3$, we obtain
\[
 \beta_1=
 \frac{
 \begin{vmatrix}
 0&\delta\alpha_2\\
 \gamma_3&\alpha_1+\delta\alpha_2
 \end{vmatrix}}
 {\gamma_3}
 =-\delta\alpha_2
\]
and
\[
 \beta_2=
 \frac{
 \begin{vmatrix}
 \alpha_1+\alpha_2&0\\
 \alpha_1&\gamma_3
 \end{vmatrix}}
 {\gamma_3}
 =\alpha_1+\alpha_2.
\]
Finally,
\[
\begin{aligned}
 [f(a_2),f(a_2)]
 &=\bigl(\beta_1^2+\beta_1\beta_2+\delta\beta_2^2\bigr)a_3\\
 &=\delta\bigl(\alpha_1^2+\alpha_1\alpha_2+\delta\alpha_2^2\bigr)a_3
 =\delta f(a_3),
\end{aligned}
\]
so the last defining relation gives no additional restriction.  We have obtained the matrices in~\eqref{eq:AutL6}.

Conversely, suppose a linear transformation has the matrix~\eqref{eq:AutL6}.  If $(\alpha_1,\alpha_2)\ne(0,0)$, then
\[
 \alpha_1^2+\alpha_1\alpha_2+\delta\alpha_2^2\ne0.
\]
Indeed, if $\alpha_2\ne0$, then $\alpha_1/\alpha_2$ would otherwise be a root of $X^2+X+\delta$; and if $\alpha_2=0$, then $\alpha_1\ne0$.  Hence the displayed matrix is nondegenerate.  Direct calculation gives
\[
\begin{aligned}
 [f(a_1),f(a_1)]&=f(a_3),\\
 [f(a_1),f(a_2)]&=f(a_3),\\
 [f(a_2),f(a_1)]&=0,\\
 [f(a_2),f(a_2)]&=\delta f(a_3).
\end{aligned}
\]
All products containing $f(a_3)$ are zero.  Thus the transformation is an automorphism.

We now describe the group structure.  Let
\[
 C=C_G(L/\F a_3).
\]
Then $C$ is a normal subgroup of $G$ and
\[
 \Xi(C)=\left\{
 \begin{pmatrix}1&0&0\\0&1&0\\r&u&1\end{pmatrix}:r,u\in\F
 \right\}.
\]
Therefore $C\cong\F^{+}\times\F^{+}$.
Let $D$ be the subgroup consisting of the matrices
\[
 \begin{pmatrix}
 p&-\delta q&0\\
 q&p+q&0\\
 0&0&p^2+pq+\delta q^2
 \end{pmatrix},
 \qquad (p,q)\ne(0,0).
\]
Every element of $G$ is a product of an element of $C$ and an element of $D$, and $C\cap D=\{1\}$.  Hence $G=C\rtimes D$.

It remains to identify $D$.  Since $X^2-X+\delta$ is irreducible whenever $X^2+X+\delta$ is irreducible, let $E=\F(\theta)$, where
\[
 \theta^2-\theta+\delta=0.
\]
Define
\[
 \Phi:E^{\times}\longrightarrow D
\]
by
\[
 \Phi(p+q\theta)=
 \begin{pmatrix}
 p&-\delta q&0\\
 q&p+q&0\\
 0&0&p^2+pq+\delta q^2
 \end{pmatrix}.
\]
If $p,q,p',q'\in\F$, then
\[
 (p+q\theta)(p'+q'\theta)
 =(pp'-\delta qq')+(pq'+qp'+qq')\theta.
\]
On the other hand, direct multiplication of the corresponding matrices gives the matrix obtained from the same two coefficients
\[
 pp'-\delta qq',
 \qquad
 pq'+qp'+qq'.
\]
Thus $\Phi$ is a homomorphism.  It is injective, because $\Phi(p+q\theta)=I$ implies $p=1$ and $q=0$, and it is surjective by the definition of $D$.  Therefore
\[
 D\cong E^{\times}.
\]
Finally, let an element of $C$ have parameters $r,u$, and let an element of $D$ have parameters $p,q$.  Direct conjugation shows that the resulting element of $C$ has the matrix
\[
\begin{pmatrix}
1&0&0\\
0&1&0\\
r(p+q)-uq&r\delta q+up&1
\end{pmatrix}.
\]
Thus the action of $D$ on $C$ is also explicitly determined, and the proof is complete.
\end{proof}

\section{Automorphism groups of the remaining algebras with one-dimensional Leibniz kernel}

\subsection{The Leibniz algebra \texorpdfstring{$L_7$}{L7}}

\begin{theorem}\label{thm:L7}
Let $L=L_7$ and $G=\Aut(L)$.  Then $\Xi(G)$ consists of all matrices
\begin{equation}\label{eq:AutL7}
\begin{pmatrix}
1&0&0\\
\alpha_2&\beta_2&0\\
\alpha_3&0&1
\end{pmatrix},
\qquad \beta_2\in\F^{\times},\quad \alpha_2,\alpha_3\in\F.
\end{equation}
Moreover, $G$ is a direct product of a subgroup isomorphic to $\F^{+}$ and a subgroup which is a semidirect product of $\F^{+}$ by $\F^{\times}$; the multiplicative group acts on the additive group by ordinary scalar multiplication.
\end{theorem}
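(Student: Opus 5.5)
The plan is a direct computation in the style of Theorems~\ref{thm:L4} and~\ref{thm:L6}, followed by reading off the group structure from the composition law of the resulting matrices.

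\textbf{Step 1: the shape of $f$.} First I identify the invariant subspaces to use. Since $[a_1+a_2,a_1+a_2]=a_3+a_2-a_2=a_3$ and every square lies in $\F a_3$, we get $\Leib(L)=\F a_3$. Also $[L,L]=\F a_2+\F a_3$. By Lemma~\ref{lem:characteristic}, every $f\in G$ preserves both subspaces. Hence
\[
f(a_1)=\alpha_1a_1+\alpha_2a_2+\alpha_3a_3,\qquad f(a_2)=\beta_2a_2+\beta_3a_3,\qquad f(a_3)=\gamma_3a_3,
\]
with $\alpha_1\beta_2\gamma_3\ne0$ by nondegeneracy.

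\textbf{Step 2: the defining relations.} I then impose the relations one at a time.
\begin{itemize}
\item From $[a_1,a_1]=a_3$: the cross terms $\alpha_1\alpha_2[a_1,a_2]+\alpha_2\alpha_1[a_2,a_1]$ cancel, so $\gamma_3=\alpha_1^2$.
\item From $[a_1,a_2]=a_2$: we get $\alpha_1\beta_2a_2=\beta_2a_2+\beta_3a_3$. Since $\beta_2\ne0$, this forces $\alpha_1=1$ and $\beta_3=0$, and hence $\gamma_3=1$.
\item From $[a_2,a_1]=-a_2$: this gives $-\beta_2\alpha_1a_2=-f(a_2)$, which now holds automatically.
\item From $[a_2,a_2]=0$: this holds trivially.
\item Every product involving $a_3$: these vanish, because $a_3$ is central.
\end{itemize}
This yields exactly the matrices~\eqref{eq:AutL7}.

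\textbf{Step 3: the converse.} I check the converse by the same four basis products, which is immediate. Only the relation $[a_1,a_2]=a_2$ carries real content, and it pins down $\alpha_1$ and $\beta_3$. No division occurs, so the argument is valid in every characteristic.

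\textbf{Step 4: the group structure.} Write $f=(\alpha_2,\beta_2,\alpha_3)$ for the automorphism with the matrix~\eqref{eq:AutL7}. Composing two such maps gives
\[
f\circ f'(a_1)=a_1+(\alpha_2+\beta_2\alpha_2')a_2+(\alpha_3+\alpha_3')a_3,\qquad f\circ f'(a_2)=\beta_2\beta_2'a_2 .
\]
So the parameter $\alpha_3$ is simply additive and does not interact with the others. Consider the following two subgroups.
\begin{itemize}
\item $A=\{\alpha_2=0,\ \beta_2=1\}$. It is isomorphic to $\F^{+}$, and it is central.
\item $B=\{\alpha_3=0\}$. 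Its elements multiply by $(\alpha_2,\beta_2)(\alpha_2',\beta_2')=(\alpha_2+\beta_2\alpha_2',\beta_2\beta_2')$. This is the affine group $\F^{+}\rtimes\F^{\times}$, with $\F^{\times}$ acting by scalar multiplication.
\end{itemize}
We have $A\cap B=1$ and $AB=G$, and both subgroups are normal. Hence $G=A\times B$.

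The only step requiring some care is noticing that the relation $[a_1,a_2]=a_2$ fixes $\alpha_1=1$. Everything else is routine, so I do not expect a genuine obstacle.
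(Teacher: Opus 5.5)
Your proposal is correct and follows essentially the same route as the paper. You use the invariance of $\Leib(L)=\F a_3$ and $[L,L]=\F a_2\oplus\F a_3$, impose the defining products (you get $\alpha_1=1$, $\beta_3=0$ from $[a_1,a_2]=a_2$ where the paper uses the mirror relation $[a_2,a_1]=-a_2$, and $\gamma_3=1$ from $[a_1,a_1]=a_3$), and split $G$ as the central subgroup $\{\alpha_2=0,\ \beta_2=1\}\cong\F^{+}$ times the affine group $\{\alpha_3=0\}\cong\F^{+}\rtimes\F^{\times}$, which is exactly the paper's decomposition $G=C_1\times C_2$.
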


\begin{proof}
Let $f\in G$.  We have
\[
 \Leib(L)=\F a_3,
 \qquad
 [L,L]=\F a_2\oplus\F a_3.
\]
Both subspaces are invariant under every automorphism.  Hence
\[
\begin{aligned}
 f(a_1)&=\alpha_1a_1+\alpha_2a_2+\alpha_3a_3,\\
 f(a_2)&=\beta_2a_2+\beta_3a_3,\\
 f(a_3)&=\gamma_3a_3,
\end{aligned}
\]
where $\alpha_1\beta_2\gamma_3\ne0$.
Using $[a_2,a_1]=-a_2$, we obtain
\[
\begin{aligned}
 -\beta_2a_2-\beta_3a_3
 &=f([a_2,a_1])=[f(a_2),f(a_1)]\\
 &=[\beta_2a_2+\beta_3a_3,
    \alpha_1a_1+\alpha_2a_2+\alpha_3a_3]\\
 &=-\alpha_1\beta_2a_2.
\end{aligned}
\]
Since $\beta_2\ne0$, it follows that
\[
 \alpha_1=1,
 \qquad
 \beta_3=0.
\]
Next,
\[
\begin{aligned}
 \gamma_3a_3
 &=f([a_1,a_1])=[f(a_1),f(a_1)]\\
 &=[a_1+\alpha_2a_2+\alpha_3a_3,
    a_1+\alpha_2a_2+\alpha_3a_3]\\
 &=a_3+\alpha_2a_2-\alpha_2a_2=a_3.
\end{aligned}
\]
Thus $\gamma_3=1$.  The relation $[a_1,a_2]=a_2$ is then preserved automatically:
\[
 [f(a_1),f(a_2)]=[a_1+\alpha_2a_2+\alpha_3a_3,\beta_2a_2]
 =\beta_2a_2=f(a_2).
\]
Therefore the matrix of $f$ is exactly~\eqref{eq:AutL7}.

Conversely, let $f$ have the matrix~\eqref{eq:AutL7}.  Then
\[
\begin{aligned}
 [f(a_1),f(a_1)]&=a_3=f(a_3),\\
 [f(a_1),f(a_2)]&=\beta_2a_2=f(a_2),\\
 [f(a_2),f(a_1)]&=-\beta_2a_2=-f(a_2),
\end{aligned}
\]
and all other basis products are zero.  Hence $f$ is an automorphism.

Let $C_1$ be the subgroup consisting of the matrices
\[
 \begin{pmatrix}1&0&0\\0&1&0\\c&0&1\end{pmatrix},
 \qquad c\in\F,
\]
and let $C_2$ be the subgroup consisting of the matrices
\[
 \begin{pmatrix}1&0&0\\b&u&0\\0&0&1\end{pmatrix},
 \qquad b\in\F,
 \quad u\in\F^{\times}.
\]
For $c,d\in\F$,
\[
 \begin{pmatrix}1&0&0\\0&1&0\\c&0&1\end{pmatrix}
 \begin{pmatrix}1&0&0\\0&1&0\\d&0&1\end{pmatrix}
 =
 \begin{pmatrix}1&0&0\\0&1&0\\c+d&0&1\end{pmatrix},
\]
so $C_1\cong\F^{+}$.  Furthermore,
\[
 \begin{pmatrix}1&0&0\\0&1&0\\c&0&1\end{pmatrix}
 \begin{pmatrix}1&0&0\\b&u&0\\0&0&1\end{pmatrix}
 =
 \begin{pmatrix}1&0&0\\b&u&0\\c&0&1\end{pmatrix}
 =
 \begin{pmatrix}1&0&0\\b&u&0\\0&0&1\end{pmatrix}
 \begin{pmatrix}1&0&0\\0&1&0\\c&0&1\end{pmatrix}.
\]
Thus $C_1$ and $C_2$ commute.  Also $C_1\cap C_2=\{1\}$, and every matrix in~\eqref{eq:AutL7} has the displayed product form with
\[
 c=\alpha_3,\qquad b=\alpha_2,\qquad u=\beta_2.
\]
Hence $G=C_1C_2$ and
\[
 G=C_1\times C_2.
\]

Inside $C_2$, let
\[
 N=\left\{
 \begin{pmatrix}1&0&0\\b&1&0\\0&0&1\end{pmatrix}:b\in\F
 \right\}
\]
and
\[
 D=\left\{
 \begin{pmatrix}1&0&0\\0&u&0\\0&0&1\end{pmatrix}:u\in\F^{\times}
 \right\}.
\]
Then $N\cong\F^{+}$, $D\cong\F^{\times}$, $N\cap D=\{1\}$, and every element of $C_2$ is a product of an element of $N$ and an element of $D$.  Their conjugation is
\[
 \begin{pmatrix}1&0&0\\0&u&0\\0&0&1\end{pmatrix}
 \begin{pmatrix}1&0&0\\b&1&0\\0&0&1\end{pmatrix}
 \begin{pmatrix}1&0&0\\0&u^{-1}&0\\0&0&1\end{pmatrix}
 =
 \begin{pmatrix}1&0&0\\ub&1&0\\0&0&1\end{pmatrix}.
\]
Thus $N\trianglelefteq C_2$ and $C_2=N\rtimes D$, with the usual scalar action of $\F^{\times}$ on $\F^{+}$.  This proves the theorem.
\end{proof}

\subsection{The characteristic-two algebra \texorpdfstring{$L_8(\lambda)$}{L8(lambda)}}

\begin{theorem}\label{thm:L8}
Assume that $\operatorname{char}\F=2$, let $L=L_8(\lambda)$, and let $G=\Aut(L)$.  Then $\Xi(G)$ consists of all matrices
\begin{equation}\label{eq:AutL8}
\begin{pmatrix}
1&0&0\\
\alpha_2&\beta_2&0\\
\alpha_3&\alpha_2\beta_2&\beta_2^2
\end{pmatrix},
\qquad
\beta_2\in\F^{\times},\quad \alpha_3\in\F,
\end{equation}
where
\begin{equation}\label{eq:L8condition}
 \alpha_2^2=\lambda(\beta_2^2-1).
\end{equation}
\end{theorem}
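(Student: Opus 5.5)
The plan is to follow the same pattern as in Theorems~\ref{thm:L4} and~\ref{thm:L7}. First identify the characteristic subspaces. For $x=x_1a_1+x_2a_2+x_3a_3$ we get
\[
[x,x]=\lambda x_1^2a_3+2x_1x_2a_2+x_2^2a_3=(\lambda x_1^2+x_2^2)a_3,
\]
because $\operatorname{char}\F=2$. Taking $x=a_2$ shows that $\Leib(L)=\F a_3$. Clearly $[L,L]=\F a_2\oplus\F a_3$. By Lemma~\ref{lem:characteristic}, every $f\in G$ therefore has the form
\[
f(a_1)=\alpha_1a_1+\alpha_2a_2+\alpha_3a_3,\qquad f(a_2)=\beta_2a_2+\beta_3a_3,\qquad f(a_3)=\gamma_3a_3,
\]
with $\alpha_1\beta_2\gamma_3\ne0$.

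Next, impose the four defining relations one at a time, using that $a_3$ lies in both the left and the right center.
\begin{itemize}
\item From $[a_1,a_2]=a_2$ we get $[f(a_1),f(a_2)]=\alpha_1\beta_2a_2+\alpha_2\beta_2a_3$, which must equal $\beta_2a_2+\beta_3a_3$. Hence $\alpha_1=1$ and $\beta_3=\alpha_2\beta_2$.
\item The relation $[a_2,a_1]=a_2$ gives the same expression $\beta_2\alpha_1a_2+\beta_2\alpha_2a_3$, so it imposes nothing new.
\item From $[a_2,a_2]=a_3$ we get $\beta_2^2a_3=\gamma_3a_3$, so $\gamma_3=\beta_2^2$.
\item From $[a_1,a_1]=\lambda a_3$ we get $[f(a_1),f(a_1)]=(\lambda+\alpha_2^2)a_3$, with the cross terms $2\alpha_2a_2$ vanishing. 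This must equal $\lambda\gamma_3a_3=\lambda\beta_2^2a_3$, so $\alpha_2^2=\lambda(\beta_2^2-1)$. Signs do not matter in characteristic two, so this is exactly~\eqref{eq:L8condition}.
\end{itemize}
These steps produce the matrix~\eqref{eq:AutL8}.

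For the converse, take a matrix of the form~\eqref{eq:AutL8} satisfying~\eqref{eq:L8condition}. Its determinant is $\beta_2^3\ne0$. The same four computations, read backwards, show that all basis products are preserved. Products involving $a_3$ vanish on both sides, so by bilinearity $f$ is an automorphism.

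I do not expect a real obstacle. The only delicate point is to keep careful track of characteristic two. The terms $2\alpha_2a_2$ must genuinely cancel, and the relation $[a_2,a_1]=a_2$ (rather than $-a_2$) must be seen to coincide with the relation for $[a_1,a_2]$. I would also note that the statement treats $\alpha_2$ as constrained by~\eqref{eq:L8condition}, so for a given $\beta_2$ it is determined up to existence of a square root. The parametrization is therefore not free, and I would not attempt a group decomposition beyond what is stated.
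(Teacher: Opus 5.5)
Your proposal is correct and follows the paper's proof essentially step for step. It uses the same invariance of $\Leib(L)=\F a_3$ and $[L,L]=\F a_2\oplus\F a_3$, extracts $\alpha_1=1$, $\beta_3=\alpha_2\beta_2$, $\gamma_3=\beta_2^2$ and $\alpha_2^2=\lambda(\beta_2^2-1)$ from the same relations, and ends with the same direct converse check. Your explicit computation of $[x,x]$ to justify $\Leib(L)=\F a_3$ is a small addition; the paper simply asserts this.
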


\begin{proof}
Let $f\in G$.  Since
\[
 \Leib(L)=\F a_3,
 \qquad
 [L,L]=\F a_2\oplus\F a_3,
\]
we can write
\[
\begin{aligned}
 f(a_1)&=\alpha_1a_1+\alpha_2a_2+\alpha_3a_3,\\
 f(a_2)&=\beta_2a_2+\beta_3a_3,\\
 f(a_3)&=\gamma_3a_3,
\end{aligned}
\]
with $\alpha_1\beta_2\gamma_3\ne0$.
The equality $[a_1,a_2]=a_2$ gives
\[
\begin{aligned}
 \beta_2a_2+\beta_3a_3
 &=f([a_1,a_2])=[f(a_1),f(a_2)]\\
 &=[\alpha_1a_1+\alpha_2a_2+\alpha_3a_3,
    \beta_2a_2+\beta_3a_3]\\
 &=\alpha_1\beta_2a_2+\alpha_2\beta_2a_3.
\end{aligned}
\]
Therefore
\[
 \alpha_1=1,
 \qquad
 \beta_3=\alpha_2\beta_2.
\]
Using $[a_2,a_2]=a_3$, we obtain
\[
 \gamma_3a_3=f(a_3)=[f(a_2),f(a_2)]=\beta_2^2a_3,
\]
so $\gamma_3=\beta_2^2$.
We also use $[a_2,a_1]=a_2$.  Since $\operatorname{char}\F=2$,
\[
\begin{aligned}
 f(a_2)
 &=f([a_2,a_1])=[f(a_2),f(a_1)]\\
 &=[\beta_2a_2+\alpha_2\beta_2a_3,
    a_1+\alpha_2a_2+\alpha_3a_3]\\
 &=\beta_2a_2+\alpha_2\beta_2a_3,
\end{aligned}
\]
so this relation imposes no additional restriction.  Finally,
\[
\begin{aligned}
 \lambda\beta_2^2a_3
 &=f(\lambda a_3)=f([a_1,a_1])=[f(a_1),f(a_1)]\\
 &=[a_1+\alpha_2a_2+\alpha_3a_3,
    a_1+\alpha_2a_2+\alpha_3a_3]\\
 &=\lambda a_3+\alpha_2a_2+\alpha_2a_2+\alpha_2^2a_3
 =(\lambda+\alpha_2^2)a_3.
\end{aligned}
\]
Hence
\[
 \alpha_2^2=\lambda(\beta_2^2-1).
\]
This gives exactly the matrices in~\eqref{eq:AutL8}.

Conversely, take a nondegenerate matrix from~\eqref{eq:AutL8}.  Then
\[
\begin{aligned}
 [f(a_1),f(a_1)]
 &=\bigl(\lambda+\alpha_2^2\bigr)a_3
 =\lambda\beta_2^2a_3
 =\lambda f(a_3),\\
 [f(a_1),f(a_2)]
 &=\beta_2a_2+\alpha_2\beta_2a_3=f(a_2),\\
 [f(a_2),f(a_1)]
 &=\beta_2a_2+\alpha_2\beta_2a_3=f(a_2),\\
 [f(a_2),f(a_2)]&=\beta_2^2a_3=f(a_3).
\end{aligned}
\]
All products involving $a_3$ are zero.  Thus the displayed matrices are precisely the automorphisms.
\end{proof}

\begin{corollary}\label{cor:L8}
Under the assumptions of Theorem~\ref{thm:L8} the following assertions hold.
\begin{enumerate}
\item If $\lambda$ is not a square in $\F$, then $G\cong\F^{+}$.
\item If $\lambda$ is a square in $\F$, then $L_8(\lambda)\cong L_8(0)$, and $G$ is a semidirect product of a normal subgroup isomorphic to $\F^{+}$ and a subgroup isomorphic to $\F^{\times}$.  The element $u\in\F^{\times}$ acts on $z\in\F^{+}$ by $z\mapsto u^2z$.
\end{enumerate}
\end{corollary}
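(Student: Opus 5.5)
Everything will be read off Theorem~\ref{thm:L8}.  In characteristic two the condition \eqref{eq:L8condition} reads $\alpha_2^2=\lambda(\beta_2+1)^2$.

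\emph{Case 1: $\lambda$ is not a square.}  If $\beta_2\ne1$, then $\lambda=(\alpha_2/(\beta_2+1))^2$ would be a square, which is impossible.  Hence $\beta_2=1$, and then $\alpha_2^2=0$, so $\alpha_2=0$.  Thus $G$ consists of the matrices with $\alpha_1=\beta_2=\gamma_3=1$, $\alpha_2=\beta_3=0$ and arbitrary $\alpha_3$.  Multiplying two such matrices adds the $\alpha_3$ entries, so $\alpha_3\mapsto$ matrix is an isomorphism $\F^{+}\to G$.

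\emph{Case 2: $\lambda=t^2$.}  The classification identification in Table~\ref{tab:status} with $s=1$ gives $\lambda'=\lambda+t^2=0$, so $L_8(\lambda)\cong L_8(0)$.  For a concrete change of basis, I would take $a_1'=a_1+ta_2$: then $[a_1',a_1']=\lambda a_3+2ta_2+t^2a_3=0$, and the other products are unchanged.  Since isomorphic algebras have isomorphic automorphism groups, it suffices to treat $\lambda=0$.

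Then \eqref{eq:L8condition} forces $\alpha_2=0$, and $G$ consists of the matrices $M(\beta_2,\alpha_3)$ with rows $(1,0,0)$, $(0,\beta_2,0)$, $(\alpha_3,0,\beta_2^2)$.  Take
\[
N=\{M(1,z): z\in\F\}\cong\F^{+},\qquad D=\{M(u,0): u\in\Fx\}\cong\Fx .
\]
Then $N=C_G(L/\Leib(L))$ restricted appropriately; more concretely, $N$ is normal as the kernel of the homomorphism $M(\beta_2,\alpha_3)\mapsto\beta_2$.  We have $N\cap D=1$, and $M(u,z)=M(1,z)M(u,0)$, so $G=N\rtimes D$.

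Finally, I would compute the action directly:
\[
M(u,0)\,M(1,z)\,M(u,0)^{-1}=M(1,u^2z),
\]
because the $(3,1)$ entry becomes $u^2\cdot z\cdot 1$.  This gives the stated action $z\mapsto u^2z$.

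The only delicate step is checking that this action is computed in the right basis.  Conjugation by an isomorphism does not change the abstract structure, so working in $L_8(0)$ is legitimate.
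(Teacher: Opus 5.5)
Your proof follows the paper's argument essentially step for step: in Case~1 you use $\beta_2^2-1=(\beta_2-1)^2$ in characteristic two to force $\beta_2=1$ and $\alpha_2=0$, and in Case~2 you reduce to $L_8(0)$ via the parameter equivalence of Table~\ref{tab:status} and split $G=N\rtimes D$ with the same conjugation formula $z\mapsto u^2z$. One small slip, in your optional explicit change of basis: with only $a_1'=a_1+ta_2$ one gets $[a_1',a_2]=[a_2,a_1']=a_2+ta_3$, so the other products are \emph{not} unchanged; taking $a_1'=a_1+ta_2$, $a_2'=a_2+ta_3$, $a_3'=a_3$ repairs this and gives exactly the multiplication table of $L_8(0)$.
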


\begin{proof}
Since $\operatorname{char}\F=2$, we have
\[
 \beta_2^2-1=(\beta_2-1)^2.
\]
Suppose first that $\lambda$ is not a square in $\F$.  If $\beta_2\ne1$, then the equality
\[
 \alpha_2^2=\lambda(\beta_2^2-1)
\]
gives
\[
 \lambda=\left(\frac{\alpha_2}{\beta_2-1}\right)^2,
\]
a contradiction.  Hence $\beta_2=1$, and then $\alpha_2=0$.  Thus the automorphism group consists of the matrices
\[
 \begin{pmatrix}1&0&0\\0&1&0\\c&0&1\end{pmatrix},
 \qquad c\in\F,
\]
which form a group isomorphic to the additive group $\F^{+}$.

Now suppose that $\lambda$ is a square in $\F$.  By the parameter equivalence in Table~\ref{tab:status}, the algebra $L_8(\lambda)$ is isomorphic to $L_8(0)$, so it is enough to consider $\lambda=0$.  Theorem~\ref{thm:L8} gives
\[
 \Xi(G)=\left\{
 \begin{pmatrix}1&0&0\\0&u&0\\c&0&u^2\end{pmatrix}:
 c\in\F,\ u\in\F^{\times}
 \right\}.
\]
Let $C$ be the subgroup
\[
 \left\{
 \begin{pmatrix}1&0&0\\0&1&0\\c&0&1\end{pmatrix}:c\in\F
 \right\}
\]
and let $D$ be the subgroup
\[
 \left\{
 \begin{pmatrix}1&0&0\\0&u&0\\0&0&u^2\end{pmatrix}:u\in\F^{\times}
 \right\}.
\]
Then $C\cong\F^{+}$, $D\cong\F^{\times}$, $C\cap D=\{1\}$, and every element of $G$ is a product of an element of $C$ and an element of $D$.  Moreover,
\[
 \begin{pmatrix}1&0&0\\0&u&0\\0&0&u^2\end{pmatrix}
 \begin{pmatrix}1&0&0\\0&1&0\\c&0&1\end{pmatrix}
 \begin{pmatrix}1&0&0\\0&u^{-1}&0\\0&0&u^{-2}\end{pmatrix}
 =
 \begin{pmatrix}1&0&0\\0&1&0\\u^2c&0&1\end{pmatrix}.
\]
Therefore $C\trianglelefteq G$ and $G=C\rtimes D$, where $u\in\F^{\times}$ acts on $c\in\F^{+}$ by $c\mapsto u^2c$.  This also describes the automorphism group of every $L_8(\lambda)$ for which $\lambda$ is a square in $\F$.
\end{proof}

\subsection{The Leibniz algebra \texorpdfstring{$L_9$}{L9} in characteristic two}

For $\operatorname{char}\F\ne2$, the automorphism group of $L_9$ is contained in the result of Di Bartolo, La Rosa and Mancini~\cite{DiBartolo}.  We therefore consider only the missing case $\operatorname{char}\F=2$.

\begin{theorem}\label{thm:L9}
Assume $\operatorname{char}\F=2$, let $L=L_9$, and let $G=\Aut(L)$.  Then $\Xi(G)$ consists of all matrices
\begin{equation}\label{eq:AutL9}
\begin{pmatrix}
1&0&0\\
\alpha_2&\beta_2&0\\
\gamma_3-1&0&\gamma_3
\end{pmatrix},
\qquad
\alpha_2\in\F,\quad \beta_2,\gamma_3\in\F^{\times}.
\end{equation}
Moreover, $G$ is a direct product of a subgroup isomorphic to $\F^{\times}$ and a subgroup which is a semidirect product of $\F^{+}$ by $\F^{\times}$.
\end{theorem}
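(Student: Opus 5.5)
The plan is the same three-stage scheme used for $L_7$ and $L_8(\lambda)$. First I pin down the invariant subspaces, then I read off the automorphism conditions from the defining products, and finally I split the resulting matrix group. For $L_9$ a direct computation gives $[x,x]=(x_1^2+x_1x_3)a_3$ for $x=x_1a_1+x_2a_2+x_3a_3$. Hence $\Leib(L)=[L,L]=\F a_3$. The left center is $\F a_2\oplus\F a_3$, since $[z,a_1]=[z,a_3]=z_1a_3$. The right center is $\F a_2\oplus\F(a_1-a_3)$, since $[a_1,z]=(z_1+z_3)a_3$. Therefore $\zeta(L)=\F a_2$.

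By Lemma~\ref{lem:characteristic}, every $f\in G$ preserves $\F a_3$ and $\zeta(L)=\F a_2$. So $f(a_1)=\alpha_1a_1+\alpha_2a_2+\alpha_3a_3$, $f(a_2)=\beta_2a_2$ and $f(a_3)=\gamma_3a_3$, with $\alpha_1\beta_2\gamma_3\ne0$. The relations are then used in turn.
\begin{itemize}
\item From $[a_1,a_3]=a_3$ we get $\gamma_3a_3=[f(a_1),\gamma_3a_3]=\alpha_1\gamma_3a_3$, hence $\alpha_1=1$.
\item From $[a_1,a_1]=a_3$ we get $\gamma_3=\alpha_1^2+\alpha_1\alpha_3=1+\alpha_3$, i.e.\ $\alpha_3=\gamma_3-1$.
\item All products involving $a_2$ vanish on both sides, because $f(a_2)\in\zeta(L)$.
\item Products of the form $[a_3,\cdot\,]$ vanish because $f(a_3)$ lies in the left center.
\end{itemize}
This yields exactly~\eqref{eq:AutL9}. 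For the converse I would check the four nonzero basis products directly, exactly as for $L_7$. No division occurs, so characteristic two causes no difficulty. In this case the computation essentially coincides with the general one, which is why the only substantive work is the bookkeeping.

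For the structure, let $C_1$ consist of the matrices with $\alpha_2=0$ and $\beta_2=1$, i.e.\ rows $(1,0,0)$, $(0,1,0)$, $(\gamma-1,0,\gamma)$. Multiplying two such matrices gives third row $\bigl((\gamma-1)+\gamma(\gamma'-1),\,0,\,\gamma\gamma'\bigr)=(\gamma\gamma'-1,\,0,\,\gamma\gamma')$. Hence $C_1\cong\F^{\times}$ via $\gamma$.

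Let $C_2$ consist of the matrices with $\gamma_3=1$, i.e.\ rows $(1,0,0)$, $(b,u,0)$, $(0,0,1)$. This is the same group as $C_2$ in Theorem~\ref{thm:L7}, so $C_2=N\rtimes D\cong\F^{+}\rtimes\F^{\times}$ with the scalar action.

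A direct multiplication in both orders gives the same matrix, with rows $(1,0,0)$, $(b,u,0)$, $(\gamma-1,0,\gamma)$. The reason is that $C_1$ only touches the $a_1$- and $a_3$-coordinates of the first and third columns, while $C_2$ only touches the $a_2$-row. So $C_1$ and $C_2$ commute, $C_1\cap C_2=\{1\}$, and $G=C_1C_2$, which gives $G=C_1\times C_2$.

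The main point needing care is the second step: one must use the center $\zeta(L)=\F a_2$, not merely $[L,L]$, to force $f(a_2)\in\F a_2$. If one only used the invariance of $\F a_3$, one would have to eliminate the $a_1$- and $a_3$-components of $f(a_2)$ by hand. That is also easy: $[f(a_2),a_1]=0$ kills the $a_1$-coefficient, and $[a_1,f(a_2)]=0$ kills the $a_3$-coefficient. So the center argument is only a shortcut.
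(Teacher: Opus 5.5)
Your proposal is correct and follows the paper's route. You use the invariance of $\Leib(L)=\F a_3$ and $\zeta(L)=\F a_2$, derive $\alpha_1=1$ and $\alpha_3=\gamma_3-1$ from $[a_1,a_3]=a_3$ and $[a_1,a_1]=a_3$, and split $G$ as the same commuting direct product of an $\F^{\times}$-subgroup and an $\F^{+}\rtimes\F^{\times}$-subgroup, with your labels $C_1$, $C_2$ interchanged relative to the paper. One small slip: $L_9$ has only two nonzero basis products, $[a_1,a_1]$ and $[a_1,a_3]$, so the converse check involves two nontrivial identities, not four.
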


\begin{proof}
Let $f\in G$.  Here
\[
 \Leib(L)=[L,L]=\F a_3,
 \qquad
 \zeta(L)=\F a_2.
\]
Therefore
\[
\begin{aligned}
 f(a_1)&=\alpha_1a_1+\alpha_2a_2+\alpha_3a_3,\\
 f(a_2)&=\beta_2a_2,\\
 f(a_3)&=\gamma_3a_3,
\end{aligned}
\]
where $\alpha_1\beta_2\gamma_3\ne0$.
Using $[a_1,a_3]=a_3$, we obtain
\[
 \gamma_3a_3=f([a_1,a_3])=[f(a_1),f(a_3)]
 =\alpha_1\gamma_3a_3.
\]
Thus $\alpha_1=1$.  Next,
\[
\begin{aligned}
 \gamma_3a_3
 &=f([a_1,a_1])=[f(a_1),f(a_1)]\\
 &=[a_1+\alpha_2a_2+\alpha_3a_3,
    a_1+\alpha_2a_2+\alpha_3a_3]\\
 &=(1+\alpha_3)a_3.
\end{aligned}
\]
Hence $\alpha_3=\gamma_3-1$, and we obtain~\eqref{eq:AutL9}.

Conversely, for every nondegenerate matrix in~\eqref{eq:AutL9},
\[
 [f(a_1),f(a_1)]=\gamma_3a_3=f(a_3),
 \qquad
 [f(a_1),f(a_3)]=\gamma_3a_3=f(a_3),
\]
and every other product of basis images is zero.  Hence all displayed matrices define automorphisms.

Let $C_1$ be the subgroup consisting of the matrices
\[
 \begin{pmatrix}1&0&0\\b&u&0\\0&0&1\end{pmatrix},
 \qquad b\in\F,
 \quad u\in\F^{\times},
\]
and let $C_2$ be the subgroup consisting of the matrices
\[
 \begin{pmatrix}1&0&0\\0&1&0\\t-1&0&t\end{pmatrix},
 \qquad t\in\F^{\times}.
\]
The correspondence
\[
 \begin{pmatrix}1&0&0\\0&1&0\\t-1&0&t\end{pmatrix}\longmapsto t
\]
is an isomorphism $C_2\cong\F^{\times}$, because
\[
 \begin{pmatrix}1&0&0\\0&1&0\\t-1&0&t\end{pmatrix}
 \begin{pmatrix}1&0&0\\0&1&0\\s-1&0&s\end{pmatrix}
 =
 \begin{pmatrix}1&0&0\\0&1&0\\ts-1&0&ts\end{pmatrix}.
\]

Inside $C_1$, put
\[
 N=\left\{
 \begin{pmatrix}1&0&0\\b&1&0\\0&0&1\end{pmatrix}:b\in\F
 \right\},
 \qquad
 D=\left\{
 \begin{pmatrix}1&0&0\\0&u&0\\0&0&1\end{pmatrix}:u\in\F^{\times}
 \right\}.
\]
Then $N\cong\F^{+}$, $D\cong\F^{\times}$, $N\cap D=\{1\}$, and every element of $C_1$ belongs to $ND$.  Moreover,
\[
 \begin{pmatrix}1&0&0\\0&u&0\\0&0&1\end{pmatrix}
 \begin{pmatrix}1&0&0\\b&1&0\\0&0&1\end{pmatrix}
 \begin{pmatrix}1&0&0\\0&u^{-1}&0\\0&0&1\end{pmatrix}
 =
 \begin{pmatrix}1&0&0\\ub&1&0\\0&0&1\end{pmatrix}.
\]
Thus $N\trianglelefteq C_1$ and $C_1=N\rtimes D$.

Finally,
\[
 \begin{pmatrix}1&0&0\\b&u&0\\0&0&1\end{pmatrix}
 \begin{pmatrix}1&0&0\\0&1&0\\t-1&0&t\end{pmatrix}
 =
 \begin{pmatrix}1&0&0\\b&u&0\\t-1&0&t\end{pmatrix}
 =
 \begin{pmatrix}1&0&0\\0&1&0\\t-1&0&t\end{pmatrix}
 \begin{pmatrix}1&0&0\\b&u&0\\0&0&1\end{pmatrix}.
\]
Hence $C_1$ and $C_2$ commute.  Their intersection is trivial, and every matrix in~\eqref{eq:AutL9} is the displayed product with
\[
 b=\alpha_2,\qquad u=\beta_2,\qquad t=\gamma_3.
\]
Therefore $G=C_1C_2$ and
\[
 G=C_1\times C_2,
\]
as asserted.
\end{proof}

\subsection{The Leibniz algebra \texorpdfstring{$L_{10}(r)$}{L10(r)}}

\begin{theorem}\label{thm:L10}
Let $r\in\F^{\times}$, $L=L_{10}(r)$, and $G=\Aut(L)$.  Then $\Xi(G)$ consists of all matrices
\begin{equation}\label{eq:AutL10}
\begin{pmatrix}
1&0&0\\
\alpha_2&\beta_2&0\\[1mm]
\dfrac{\gamma_3-1}{r}&0&\gamma_3
\end{pmatrix},
\qquad
\alpha_2\in\F,\quad \beta_2,\gamma_3\in\F^{\times}.
\end{equation}
Moreover, $G$ is a direct product of a subgroup isomorphic to $\F^{\times}$ and a subgroup which is a semidirect product of $\F^{+}$ by $\F^{\times}$.  In particular, the abstract automorphism groups are isomorphic for all $r\ne0$.
\end{theorem}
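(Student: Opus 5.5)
We follow the same scheme as in Theorems~\ref{thm:L7} and~\ref{thm:L9}. In $L=L_{10}(r)$ we have $\Leib(L)=\F a_3$ and $[L,L]=\F a_2\oplus\F a_3$, and both subspaces are $f$-invariant by Lemma~\ref{lem:characteristic}. So we write
\[
 f(a_1)=\alpha_1a_1+\alpha_2a_2+\alpha_3a_3,\qquad f(a_2)=\beta_2a_2+\beta_3a_3,\qquad f(a_3)=\gamma_3a_3,
\]
with $\alpha_1\beta_2\gamma_3\ne0$. Since $a_2$ and $a_3$ lie in the left center (products with a left factor $a_2$ or $a_3$ only involve $[a_2,a_1]=-a_2$), we can evaluate the defining relations one at a time.

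First, $[a_1,a_3]=ra_3$ gives $r\gamma_3a_3=[f(a_1),f(a_3)]=\alpha_1 r\gamma_3a_3$, so $\alpha_1=1$ because $r\ne0$. Next, $[a_2,a_1]=-a_2$ gives
\[
 -\beta_2a_2-\beta_3a_3=[\beta_2a_2+\beta_3a_3,\,f(a_1)]=-\beta_2a_2,
\]
which forces $\beta_3=0$. Then $[a_1,a_2]=a_2$ holds automatically, since $[f(a_1),\beta_2a_2]=\beta_2a_2$. The key relation is $[a_1,a_1]=a_3$:
\[
 [f(a_1),f(a_1)]=a_3+\alpha_2a_2+r\alpha_3a_3-\alpha_2a_2=(1+r\alpha_3)a_3,
\]
so $\gamma_3=1+r\alpha_3$, that is, $\alpha_3=(\gamma_3-1)/r$. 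Finally, the products $[a_2,a_2]$, $[a_3,\cdot]$ and $[a_2,a_3]$ vanish on both sides. This yields exactly the form~\eqref{eq:AutL10}. For the converse, we recheck the four nonzero basis products for an arbitrary matrix of this form, which is the same computation read backwards; the matrix is nondegenerate since its determinant is $\beta_2\gamma_3\ne0$.

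For the group structure we copy the proof of Theorem~\ref{thm:L9}. Let $C_1$ be the set of matrices with $\gamma_3=1$ (so $\alpha_3=0$), parametrized by $b=\alpha_2\in\F$ and $u=\beta_2\in\Fx$. Let $C_2$ be the set of matrices with $\alpha_2=0$, $\beta_2=1$, parametrized by $t=\gamma_3\in\Fx$. Multiplying two elements of $C_2$ gives third-row entries $(t-1)/r+t(s-1)/r=(ts-1)/r$, so $t\mapsto$ (matrix) is an isomorphism $C_2\cong\Fx$. A direct check shows that elements of $C_1$ and $C_2$ commute: both products equal the matrix with entries $b,u,(t-1)/r,t$. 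We also have $C_1\cap C_2=1$ and $G=C_1C_2$. Inside $C_1$, the unipotent matrices form a normal subgroup $N\cong\F^{+}$ and the diagonal matrices form $D\cong\Fx$. Conjugation gives $b\mapsto ub$, so $C_1=N\rtimes D$. Since this abstract description does not depend on $r$, the final claim follows.

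The main obstacle is only bookkeeping: checking that the third-row entry of the product in $C_2$ and of the commutation products is exactly $(ts-1)/r$. Because $r$ enters only through the ratio $(\gamma_3-1)/r$, this is a one-line computation valid in every characteristic, including characteristic two.
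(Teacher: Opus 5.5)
Your proof is correct and is essentially the paper's argument: the same triangular form comes from the invariance of $\Leib(L)$ and $[L,L]$, and the same subgroups $C_1$, $C_2$, $N$, $D$ appear. The only change is that you get $\alpha_1=1$ from $[a_1,a_3]=ra_3$ rather than from $[a_2,a_1]=-a_2$, which is equally valid since $r\gamma_3\ne0$. One wording slip: $a_2$ is not in the left center, because $[a_2,a_1]=-a_2\ne0$ (only $a_3$ is), although your computations do account for this product correctly.
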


\begin{proof}
Let $f\in G$.  We have
\[
 \Leib(L)=\F a_3,
 \qquad
 [L,L]=\F a_2\oplus\F a_3.
\]
Hence
\[
\begin{aligned}
 f(a_1)&=\alpha_1a_1+\alpha_2a_2+\alpha_3a_3,\\
 f(a_2)&=\beta_2a_2+\beta_3a_3,\\
 f(a_3)&=\gamma_3a_3,
\end{aligned}
\]
where $\alpha_1\beta_2\gamma_3\ne0$.
From $[a_2,a_1]=-a_2$ we get
\[
\begin{aligned}
 -\beta_2a_2-\beta_3a_3
 &=f([a_2,a_1])=[f(a_2),f(a_1)]\\
 &=-\alpha_1\beta_2a_2.
\end{aligned}
\]
Therefore
\[
 \alpha_1=1,
 \qquad
 \beta_3=0.
\]
Now
\[
\begin{aligned}
 \gamma_3a_3
 &=f([a_1,a_1])=[f(a_1),f(a_1)]\\
 &=[a_1+\alpha_2a_2+\alpha_3a_3,
    a_1+\alpha_2a_2+\alpha_3a_3]\\
 &=a_3+\alpha_2a_2-\alpha_2a_2+r\alpha_3a_3
 =(1+r\alpha_3)a_3.
\end{aligned}
\]
Thus
\[
 \alpha_3=\frac{\gamma_3-1}{r}.
\]
The relation $[a_1,a_2]=a_2$ gives
\[
 [f(a_1),f(a_2)]=\beta_2a_2=f(a_2),
\]
and the relation $[a_1,a_3]=ra_3$ gives
\[
 [f(a_1),f(a_3)]=r\gamma_3a_3=r f(a_3).
\]
Hence there are no additional restrictions and we obtain~\eqref{eq:AutL10}.

Conversely, every nondegenerate matrix in~\eqref{eq:AutL10} satisfies
\[
\begin{aligned}
 [f(a_1),f(a_1)]&=f(a_3),\\
 [f(a_1),f(a_2)]&=f(a_2),\\
 [f(a_2),f(a_1)]&=-f(a_2),\\
 [f(a_1),f(a_3)]&=r f(a_3),
\end{aligned}
\]
and all remaining basis products are zero.  Thus it defines an automorphism.

Let $C_1$ be the subgroup consisting of
\[
 \begin{pmatrix}1&0&0\\b&u&0\\0&0&1\end{pmatrix},
 \qquad b\in\F,
 \quad u\in\F^{\times},
\]
and let $C_2$ be the subgroup consisting of
\[
 \begin{pmatrix}1&0&0\\0&1&0\\(t-1)/r&0&t\end{pmatrix},
 \qquad t\in\F^{\times}.
\]
The correspondence
\[
 \begin{pmatrix}1&0&0\\0&1&0\\(t-1)/r&0&t\end{pmatrix}
 \longmapsto t
\]
is an isomorphism $C_2\cong\F^{\times}$, since
\[
 \begin{pmatrix}1&0&0\\0&1&0\\(t-1)/r&0&t\end{pmatrix}
 \begin{pmatrix}1&0&0\\0&1&0\\(s-1)/r&0&s\end{pmatrix}
 =
 \begin{pmatrix}1&0&0\\0&1&0\\(ts-1)/r&0&ts\end{pmatrix}.
\]

Inside $C_1$, put
\[
 N=\left\{
 \begin{pmatrix}1&0&0\\b&1&0\\0&0&1\end{pmatrix}:b\in\F
 \right\},
 \qquad
 D=\left\{
 \begin{pmatrix}1&0&0\\0&u&0\\0&0&1\end{pmatrix}:u\in\F^{\times}
 \right\}.
\]
Then $N\cong\F^{+}$, $D\cong\F^{\times}$, $N\cap D=\{1\}$, and every element of $C_1$ belongs to $ND$.  Moreover,
\[
 \begin{pmatrix}1&0&0\\0&u&0\\0&0&1\end{pmatrix}
 \begin{pmatrix}1&0&0\\b&1&0\\0&0&1\end{pmatrix}
 \begin{pmatrix}1&0&0\\0&u^{-1}&0\\0&0&1\end{pmatrix}
 =
 \begin{pmatrix}1&0&0\\ub&1&0\\0&0&1\end{pmatrix}.
\]
Thus $N\trianglelefteq C_1$ and $C_1=N\rtimes D$.

Finally,
\[
 \begin{pmatrix}1&0&0\\b&u&0\\0&0&1\end{pmatrix}
 \begin{pmatrix}1&0&0\\0&1&0\\(t-1)/r&0&t\end{pmatrix}
 =
 \begin{pmatrix}1&0&0\\b&u&0\\(t-1)/r&0&t\end{pmatrix}
 =
 \begin{pmatrix}1&0&0\\0&1&0\\(t-1)/r&0&t\end{pmatrix}
 \begin{pmatrix}1&0&0\\b&u&0\\0&0&1\end{pmatrix}.
\]
Hence $C_1$ and $C_2$ commute.  Their intersection is trivial, and every matrix in~\eqref{eq:AutL10} is the displayed product with
\[
 b=\alpha_2,\qquad u=\beta_2,\qquad t=\gamma_3.
\]
Therefore
\[
 G=C_1\times C_2.
\]
This decomposition does not depend on the value of $r\ne0$, so all groups $\Aut(L_{10}(r))$ are abstractly isomorphic.
\end{proof}

\subsection{The Leibniz algebra \texorpdfstring{$L_{11}$}{L11}}

\begin{theorem}\label{thm:L11}
Assume $\operatorname{char}\F\ne2$, let $L=L_{11}$, and let $G=\Aut(L)$.  Then $\Xi(G)$ consists of all matrices
\begin{equation}\label{eq:AutL11}
\begin{pmatrix}
1&0&0\\[1mm]
\alpha_2&\beta_2&0\\[1mm]
-\dfrac{\alpha_2^2}{2}&-\alpha_2\beta_2&\beta_2^2
\end{pmatrix},
\qquad
\alpha_2\in\F,\quad \beta_2\in\F^{\times}.
\end{equation}
Moreover, $G$ is a semidirect product of a normal subgroup isomorphic to $\F^{+}$ and a subgroup isomorphic to $\F^{\times}$, where the latter acts by ordinary scalar multiplication.
\end{theorem}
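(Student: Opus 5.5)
The plan is to use the characteristic subspaces of Lemma~\ref{lem:characteristic} to cut down the shape of $\Xi(f)$. Then we impose the defining products one by one, check the converse, and finally split the group.

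\textbf{Shape of $\Xi(f)$.} For $x=x_1a_1+x_2a_2+x_3a_3$, a direct expansion gives $[x,x]=(x_2^2+2x_1x_3)a_3$, since the $a_2$-terms $x_1x_2a_2-x_1x_2a_2$ cancel. Hence $\Leib(L)=\F a_3$; this is also the left center direction used below, because $[a_3,\cdot]=0$. Clearly $[L,L]=\F a_2\oplus\F a_3$. Both subspaces are $f$-invariant, so we may write
\[
 f(a_1)=\alpha_1a_1+\alpha_2a_2+\alpha_3a_3,\quad f(a_2)=\beta_2a_2+\beta_3a_3,\quad f(a_3)=\gamma_3a_3,
\]
with $\alpha_1\beta_2\gamma_3\ne0$.

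\textbf{Necessary conditions.} We impose the products in the following order.
\begin{itemize}
\item From $[a_2,a_1]=-a_2$, using that $a_3$ lies in the left center and $[a_2,a_2]=a_3$, we get $-\beta_2a_2-\beta_3a_3=-\alpha_1\beta_2a_2+\alpha_2\beta_2a_3$. This forces $\alpha_1=1$ and $\beta_3=-\alpha_2\beta_2$.
\item From $[a_2,a_2]=a_3$ we get $\gamma_3=\beta_2^2$.
\item From $[a_1,a_1]=0$ we get $\alpha_2^2+2\alpha_3=0$. This is the only place where $\operatorname{char}\F\ne2$ is used: we solve $\alpha_3=-\alpha_2^2/2$.
\item The remaining relations must then hold automatically; this is the check to carry out. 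For $[a_1,a_2]=a_2$, the $a_3$-coefficient of $[f(a_1),f(a_2)]$ is $\alpha_2\beta_2+2\beta_3=-\alpha_2\beta_2$, which matches $f(a_2)$. The relation $[a_1,a_3]=2a_3$ gives $2\gamma_3a_3$ on both sides. The products $[a_2,a_3]$ and $[a_3,\cdot]$ vanish on both sides.
\end{itemize}
This yields exactly the matrices~\eqref{eq:AutL11}. The converse is the same list of identities read backwards, so every matrix~\eqref{eq:AutL11} with $\beta_2\ne0$ is an automorphism.

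\textbf{Group structure.} Let $N$ consist of the matrices with $\beta_2=1$. Then $N$ is the kernel of the homomorphism $f\mapsto\beta_2$, which records the action of $f$ on $[L,L]/\Leib(L)$, so $N\trianglelefteq G$. Multiplying two elements of $N$ shows that the parameters add: the first column of the product is $(1,\alpha+\alpha',-(\alpha+\alpha')^2/2)$. Hence $N\cong\F^{+}$. Let $D=\{\operatorname{diag}(1,u,u^2):u\in\F^{\times}\}\cong\F^{\times}$. Then $N\cap D=1$, and every matrix in~\eqref{eq:AutL11} factors as an element of $N$ with parameter $\alpha_2$ times $\operatorname{diag}(1,\beta_2,\beta_2^2)$, so $G=N\rtimes D$. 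A direct conjugation computation should show that $\operatorname{diag}(1,u,u^2)$ sends the parameter $\alpha$ to $u\alpha$, which is the scalar action.

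\textbf{Main obstacle.} I expect the only delicate point to be the bookkeeping of the $a_3$-components coming from $[a_2,a_2]=a_3$ and $[a_1,a_3]=2a_3$. These give the unusual entries $-\alpha_2\beta_2$ and $-\alpha_2^2/2$, and they are also what must be tracked when verifying closure of $N$ and the conjugation formula.
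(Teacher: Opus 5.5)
Your proposal is correct and follows the paper's proof essentially step by step. It uses the same normal form from the invariance of $\Leib(L)=\F a_3$ and $[L,L]=\F a_2\oplus\F a_3$, the same defining relations, and the same splitting $G=N\rtimes D$ with $D=\{\operatorname{diag}(1,u,u^2)\}$. The differences are only cosmetic: you obtain $\alpha_1=1$ and $\beta_3=-\alpha_2\beta_2$ from $[a_2,a_1]=-a_2$ rather than from $[a_1,a_2]=a_2$, and you get normality of $N$ as the kernel of the induced action on $[L,L]/\Leib(L)$ instead of from the explicit conjugation formula.
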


\begin{proof}
Let $f\in G$.  Since
\[
 \Leib(L)=\F a_3,
 \qquad
 [L,L]=\F a_2\oplus\F a_3,
\]
we write
\[
\begin{aligned}
 f(a_1)&=\alpha_1a_1+\alpha_2a_2+\alpha_3a_3,\\
 f(a_2)&=\beta_2a_2+\beta_3a_3,\\
 f(a_3)&=\gamma_3a_3,
\end{aligned}
\]
where $\alpha_1\beta_2\gamma_3\ne0$.
The relation $[a_2,a_2]=a_3$ gives
\[
 \gamma_3a_3=[f(a_2),f(a_2)]=\beta_2^2a_3,
\]
so
\[
 \gamma_3=\beta_2^2.
\]
Next,
\[
\begin{aligned}
 \beta_2a_2+\beta_3a_3
 &=f([a_1,a_2])=[f(a_1),f(a_2)]\\
 &=\alpha_1\beta_2a_2
   +(\alpha_2\beta_2+2\alpha_1\beta_3)a_3.
\end{aligned}
\]
Since $\beta_2\ne0$, we obtain
\[
 \alpha_1=1,
 \qquad
 \beta_3=-\alpha_2\beta_2.
\]
We also use $[a_2,a_1]=-a_2$.  We have
\[
\begin{aligned}
 -\beta_2a_2-\beta_3a_3
 &=f([a_2,a_1])=[f(a_2),f(a_1)]\\
 &=[\beta_2a_2+\beta_3a_3,
    a_1+\alpha_2a_2+\alpha_3a_3]\\
 &=-\beta_2a_2+\alpha_2\beta_2a_3.
\end{aligned}
\]
Thus again $\beta_3=-\alpha_2\beta_2$, so no additional condition is obtained.  Finally, because $[a_1,a_1]=0$,
\[
\begin{aligned}
0&=[f(a_1),f(a_1)]\\
 &=[a_1+\alpha_2a_2+\alpha_3a_3,
    a_1+\alpha_2a_2+\alpha_3a_3]\\
 &=(\alpha_2^2+2\alpha_3)a_3.
\end{aligned}
\]
Thus
\[
 \alpha_3=-\frac{\alpha_2^2}{2}.
\]
The relation $[a_1,a_3]=2a_3$ is then automatic, because
\[
 [f(a_1),f(a_3)]=2\beta_2^2a_3=2f(a_3).
\]
We have obtained~\eqref{eq:AutL11}.

Conversely, for a matrix in~\eqref{eq:AutL11} we have
\[
\begin{aligned}
 [f(a_2),f(a_2)]&=f(a_3),\\
 [f(a_1),f(a_2)]&=f(a_2),\\
 [f(a_2),f(a_1)]&=-f(a_2),\\
 [f(a_1),f(a_3)]&=2f(a_3),\\
 [f(a_1),f(a_1)]&=0.
\end{aligned}
\]
Hence every displayed nondegenerate matrix defines an automorphism.

Let $C$ be the subgroup consisting of the matrices obtained by putting $\beta_2=1$ in~\eqref{eq:AutL11}, and let $D$ be the subgroup obtained by putting $\alpha_2=0$.  Then
\[
 C=\left\{
 \begin{pmatrix}1&0&0\\b&1&0\\-b^2/2&-b&1\end{pmatrix}:b\in\F
 \right\}
 \cong\F^{+},
\]
and
\[
 D=\left\{
 \begin{pmatrix}1&0&0\\0&u&0\\0&0&u^2\end{pmatrix}:u\in\F^{\times}
 \right\}
 \cong\F^{\times}.
\]
Indeed, direct multiplication gives
\[
\begin{pmatrix}1&0&0\\b&1&0\\-b^2/2&-b&1\end{pmatrix}
\begin{pmatrix}1&0&0\\c&1&0\\-c^2/2&-c&1\end{pmatrix}
=
\begin{pmatrix}1&0&0\\b+c&1&0\\-(b+c)^2/2&-(b+c)&1\end{pmatrix}.
\]
Moreover,
\[
\begin{pmatrix}1&0&0\\0&u&0\\0&0&u^2\end{pmatrix}
\begin{pmatrix}1&0&0\\b&1&0\\-b^2/2&-b&1\end{pmatrix}
\begin{pmatrix}1&0&0\\0&u^{-1}&0\\0&0&u^{-2}\end{pmatrix}
=
\begin{pmatrix}1&0&0\\ub&1&0\\-(ub)^2/2&-ub&1\end{pmatrix}.
\]
Thus $C\trianglelefteq G$, $C\cap D=\{1\}$, and every element of $G$ belongs to $CD$.  Therefore $G=C\rtimes D$, with the usual scalar action of $\F^{\times}$ on $\F^{+}$.
\end{proof}

\section{Automorphism groups in the case \texorpdfstring{$\dim_{\F}\Leib(L)=2$}{dim Leib(L)=2}}

The automorphism groups of $L_{12}$, $L_{13}$, and $L_{15}$ have already been described in the papers cited in Table~\ref{tab:status}.  We do not repeat their proofs.  The case $L_{14}(1)$ was also treated in~\cite{AutNonNilpotent2026}.  Thus it remains to consider the other values of the parameter in $L_{14}(\rho)$ and to give a completely explicit three-dimensional form for the one-generated type-II algebra $L_{16}(\beta,\gamma)$.

\subsection{The Leibniz algebra \texorpdfstring{$L_{14}(\rho)$}{L14(rho)}}

\begin{theorem}\label{thm:L14}
Let $L=L_{14}(\rho)$, $\rho\in\F^{\times}$, and let $G=\Aut(L)$.

If $\rho=1$, the previously known result~\cite{AutNonNilpotent2026}, written in the present basis, is
\begin{equation}\label{eq:AutL14one}
\Xi(G)=
\left\{
\begin{pmatrix}
1&0&0\\
u-1&u&s\\v&v&t
\end{pmatrix}:
 ut-vs\ne0
\right\},
\end{equation}
and $G\cong GL_2(\F)$.

If $\rho\ne1$ and either $\operatorname{char}\F=2$ or $\rho\ne-1$, then
\begin{equation}\label{eq:AutL14generic}
\Xi(G)=
\left\{
\begin{pmatrix}
1&0&0\\u-1&u&0\\0&0&t
\end{pmatrix}:
 u,t\in\F^{\times}
\right\},
\end{equation}
and $G\cong\F^{\times}\times\F^{\times}$.

If $\operatorname{char}\F\ne2$ and $\rho=-1$, then, in addition to the matrices in~\eqref{eq:AutL14generic}, $\Xi(G)$ contains all matrices
\begin{equation}\label{eq:AutL14minus}
\begin{pmatrix}
-1&0&0\\
1&0&s\\
y&y&0
\end{pmatrix},
\qquad s,y\in\F^{\times}.
\end{equation}
In this case $G$ is a semidirect product of $\F^{\times}\times\F^{\times}$ by a group of order two, and the element of order two interchanges the two multiplicative factors.
\end{theorem}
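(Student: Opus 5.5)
The plan is to compute $\Aut(L)$ directly from the automorphism conditions, as in the preceding theorems, but organized around one structural observation. In $L=L_{14}(\rho)$ we have
\[
 [x,x]=(x_1^2+x_1x_2)a_2+\rho x_1x_3a_3 \qquad\text{for } x=x_1a_1+x_2a_2+x_3a_3,
\]
so $\Leib(L)=[L,L]=\F a_2\oplus\F a_3$ (taking $x=a_1$ and $x=a_1+a_3$ shows both $a_2$ and $a_3$ lie in it, since $\rho\ne0$). Let $f\in G$. By Lemma~\ref{lem:characteristic}, $f$ preserves this subspace, so
\[
 f(a_1)=\alpha_1a_1+\alpha_2a_2+\alpha_3a_3,\qquad f(a_2)=\beta_2a_2+\beta_3a_3,\qquad f(a_3)=\gamma_2a_2+\gamma_3a_3,
\]
with $\alpha_1\ne0$ and $\beta_2\gamma_3-\beta_3\gamma_2\ne0$. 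Since $\Leib(L)$ lies in the left center, $[f(a_1),y]=\alpha_1R(y)$ for all $y\in L$. Here $R$ is the operator of left multiplication by $a_1$ restricted to $\Leib(L)$, which is $\operatorname{diag}(1,\rho)$ in the basis $a_2,a_3$.

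The key step is the following reformulation. The relations $[a_1,a_2]=a_2$ and $[a_1,a_3]=\rho a_3$ say exactly that $f|_{\Leib(L)}\circ R=\alpha_1R\circ f|_{\Leib(L)}$. In particular, $f|_{\Leib(L)}$ conjugates $R$ to $\alpha_1R$, so the eigenvalue sets must agree: $\{1,\rho\}=\{\alpha_1,\alpha_1\rho\}$.
\begin{itemize}
\item If $\alpha_1=1$ and $\rho\ne1$, then $f$ preserves the two eigenlines, so $\beta_3=\gamma_2=0$.
\item If $\alpha_1\ne1$, then $\alpha_1=\rho$ and $\alpha_1\rho=1$. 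Hence $\rho^2=1$ with $\rho\ne1$, which forces $\rho=-1$, $\alpha_1=-1$ and $\operatorname{char}\F\ne2$. In this case $f$ swaps the eigenlines: $f(a_2)\in\F a_3$ and $f(a_3)\in\F a_2$.
\item If $\rho=1$, then $R$ is the identity and the intertwining condition is vacuous.
\end{itemize}

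The remaining relation is $[a_1,a_1]=a_2$, which gives $f(a_2)=\alpha_1\bigl((\alpha_1+\alpha_2)a_2+\rho\alpha_3a_3\bigr)$, that is,
\[
 \beta_2=\alpha_1(\alpha_1+\alpha_2),\qquad \beta_3=\rho\alpha_1\alpha_3 .
\]
Substituting into each case should produce the three displayed families:
\begin{itemize}
\item For $\rho=1$: $\beta_2=1+\alpha_2=:u$ and $\beta_3=\alpha_3=:v$, with $f(a_3)$ free subject to invertibility. This gives~\eqref{eq:AutL14one}.
\item For $\alpha_1=1$, $\rho\ne1$: $\beta_3=0$ forces $\alpha_3=0$, and $u=1+\alpha_2$. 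This gives~\eqref{eq:AutL14generic}.
\item For $\alpha_1=-1$, $\rho=-1$: $\beta_2=0$ forces $\alpha_2=1$, while $\beta_3=\alpha_3=:y$ and $f(a_3)=s\,a_2$. This gives~\eqref{eq:AutL14minus}.
\end{itemize}
Conversely, I would verify directly that every listed matrix preserves the four products involving $a_1$ on the left. All products with left factor in $\Leib(L)$ vanish automatically.

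For the group structure I would argue as follows.
\begin{itemize}
\item For $\rho=1$, the isomorphism with $GL_2(\F)$ can be cited from~\cite{AutNonNilpotent2026} after the basis change in Remark~\ref{rem:basis}. Alternatively, I would check that sending a matrix to its restriction $\begin{pmatrix}u&s\\ v&t\end{pmatrix}$ to $\Leib(L)$ is an isomorphism; the first column is determined by the others.
\item For the generic case, multiplying two matrices of~\eqref{eq:AutL14generic} visibly gives $(u,t)(u',t')=(uu',tt')$, so $G\cong\F^{\times}\times\F^{\times}$.
\item For $\rho=-1$, I take $\tau$ to be the element of~\eqref{eq:AutL14minus} with $s=y=1$. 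A direct check gives $\tau^2=1$, and conjugating $\operatorname{diag}$-type elements with parameters $(u,t)$ by $\tau$ gives parameters $(t,u)$. Every element of the second family is an element of~\eqref{eq:AutL14generic} times $\tau$. Together these give the semidirect product $(\F^{\times}\times\F^{\times})\rtimes\langle\tau\rangle$.
\end{itemize}

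I expect the main obstacle to be bookkeeping rather than ideas. One point is making sure that the case $\rho=-1$ in characteristic $2$ collapses correctly into $\rho=1$, since then $-1=1$ and it falls under the first item. The other is computing the product in the $\rho=-1$ case carefully, because the $(2,1)$ entry $u-1$ must be tracked to confirm that the swapped parameters come out exactly as $(t,u)$.
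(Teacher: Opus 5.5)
Your proposal is correct and is essentially the paper's argument. The intertwining relation $f|_{\Leib(L)}\circ R=\alpha_1R\circ f|_{\Leib(L)}$ is the paper's system $(1-\alpha_1)\beta_2=(1-\alpha_1\rho)\beta_3=(\rho-\alpha_1)\gamma_2=(1-\alpha_1)\rho\gamma_3=0$ in matrix form. Your case split on $\alpha_1$, the resulting families, and the involution $\tau$ coincide with the paper's, the only differences being that you derive the $\rho=1$ family directly rather than citing it, and obtain $G\cong GL_2(\F)$ from the restriction homomorphism $f\mapsto f|_{\Leib(L)}$ rather than by multiplying matrices. Two small corrections: for $\rho=1$ the intertwining condition is not vacuous, since it forces $\alpha_1=1$ (your eigenvalue comparison already gives this). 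For general $y\in L$ the correct statement is $[f(a_1),y]=\alpha_1[a_1,y]$, because $R$ is only defined on $\Leib(L)$.
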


\begin{proof}
The case $\rho=1$ was proved in~\cite{AutNonNilpotent2026}; we only rewrite the published result in the present basis.  The correspondence
\[
 \begin{pmatrix}u&s\\v&t\end{pmatrix}
 \longmapsto
 \begin{pmatrix}
 1&0&0\\u-1&u&s\\v&v&t
 \end{pmatrix}
\]
is a bijection from $GL_2(\F)$ onto the group in~\eqref{eq:AutL14one}.  If
\[
 A=\begin{pmatrix}u&s\\v&t\end{pmatrix},
 \qquad
 B=\begin{pmatrix}u'&s'\\v'&t'\end{pmatrix},
\]
then the product of the corresponding $3\times3$ matrices is
\[
 \begin{pmatrix}
 1&0&0\\
 uu'+sv'-1&uu'+sv'&us'+st'\\
 vu'+tv'&vu'+tv'&vs'+tt'
 \end{pmatrix},
\]
which is exactly the matrix corresponding to
\[
 AB=
 \begin{pmatrix}
 uu'+sv'&us'+st'\\
 vu'+tv'&vs'+tt'
 \end{pmatrix}.
\]
Thus the correspondence is a homomorphism.  Since it is already bijective, it is an isomorphism, and hence
\[
 \Aut(L_{14}(1))\cong GL_2(\F).
\]

Assume from now on that $\rho\ne1$, and let $f\in G$.  Since
\[
 \Leib(L)=[L,L]=\F a_2\oplus\F a_3
\]
is invariant under every automorphism, we have
\[
\begin{aligned}
 f(a_1)&=\alpha_1a_1+\alpha_2a_2+\alpha_3a_3,\\
 f(a_2)&=\beta_2a_2+\beta_3a_3,\\
 f(a_3)&=\gamma_2a_2+\gamma_3a_3,
\end{aligned}
\]
where
\[
 \alpha_1(\beta_2\gamma_3-\beta_3\gamma_2)\ne0.
\]
From $[a_1,a_1]=a_2$ we obtain
\[
\begin{aligned}
 \beta_2a_2+\beta_3a_3
 &= [f(a_1),f(a_1)]\\
 &=\alpha_1^2[a_1,a_1]
   +\alpha_1\alpha_2[a_1,a_2]
   +\alpha_1\alpha_3[a_1,a_3]\\
 &=\bigl(\alpha_1^2+\alpha_1\alpha_2\bigr)a_2
   +\alpha_1\rho\alpha_3a_3.
\end{aligned}
\]
Thus
\begin{equation}\label{eq:L14-1}
 \beta_2=\alpha_1^2+\alpha_1\alpha_2,
 \qquad
 \beta_3=\alpha_1\rho\alpha_3.
\end{equation}
Using $[a_1,a_2]=a_2$, we have
\[
\begin{aligned}
 \beta_2a_2+\beta_3a_3
 &=[f(a_1),f(a_2)]\\
 &=\alpha_1\beta_2a_2+\alpha_1\rho\beta_3a_3.
\end{aligned}
\]
Hence
\begin{equation}\label{eq:L14-2}
 (1-\alpha_1)\beta_2=0,
 \qquad
 (1-\alpha_1\rho)\beta_3=0.
\end{equation}
Finally, from $[a_1,a_3]=\rho a_3$ we obtain
\[
\begin{aligned}
 \rho\gamma_2a_2+\rho\gamma_3a_3
 &= [f(a_1),f(a_3)]\\
 &=\alpha_1\gamma_2a_2+\alpha_1\rho\gamma_3a_3,
\end{aligned}
\]
so
\begin{equation}\label{eq:L14-3}
 (\rho-\alpha_1)\gamma_2=0,
 \qquad
 (1-\alpha_1)\rho\gamma_3=0.
\end{equation}

Suppose first that $\alpha_1=1$.  Since $\rho\ne1$, equations~\eqref{eq:L14-2} and~\eqref{eq:L14-3} give
\[
 \beta_3=0,
 \qquad
 \gamma_2=0.
\]
By~\eqref{eq:L14-1},
\[
 \alpha_2=\beta_2-1,
 \qquad
 \alpha_3=0.
\]
The nondegeneracy condition gives $\beta_2\gamma_3\ne0$.  We obtain the matrices~\eqref{eq:AutL14generic}.

Suppose now that $\alpha_1\ne1$.  Equations~\eqref{eq:L14-2} and~\eqref{eq:L14-3} give
\[
 \beta_2=0,
 \qquad
 \gamma_3=0.
\]
Hence the nondegeneracy condition implies $\beta_3\gamma_2\ne0$.  Therefore the remaining equations in~\eqref{eq:L14-2} and~\eqref{eq:L14-3} yield
\[
 \alpha_1\rho=1,
 \qquad
 \alpha_1=\rho.
\]
Consequently $\rho^2=1$.  Since $\rho\ne1$, this is possible only when $\operatorname{char}\F\ne2$ and $\rho=-1$, and then $\alpha_1=-1$.  From~\eqref{eq:L14-1} we get
\[
 \alpha_2=1,
 \qquad
 \beta_3=\alpha_3.
\]
Thus the additional automorphisms are precisely the matrices~\eqref{eq:AutL14minus}.

We now verify the converse explicitly from the defining products.  For a matrix~\eqref{eq:AutL14generic},
\[
 [f(a_1),f(a_1)]=f(a_2),
 \qquad
 [f(a_1),f(a_2)]=f(a_2),
 \qquad
 [f(a_1),f(a_3)]=\rho f(a_3).
\]
Now let $\rho=-1$ and let $f$ have a matrix of the form~\eqref{eq:AutL14minus}.  Then
\[
 f(a_1)=-a_1+a_2+ya_3,\qquad
 f(a_2)=ya_3,\qquad
 f(a_3)=sa_2.
\]
Using
\[
 [a_1,a_1]=[a_1,a_2]=a_2,\qquad [a_1,a_3]=-a_3,
\]
we obtain
\[
\begin{aligned}
 [f(a_1),f(a_1)]
 &=a_2-a_2+ya_3=ya_3=f(a_2),\\
 [f(a_1),f(a_2)]
 &=-y[a_1,a_3]=ya_3=f(a_2),\\
 [f(a_1),f(a_3)]
 &=-s[a_1,a_2]=-sa_2=-f(a_3).
\end{aligned}
\]
Since $\F a_2\oplus\F a_3\le\zleft(L)$, all other products are zero.  Hence every displayed nondegenerate matrix is an automorphism.

If $\rho\ne1$ and either $\operatorname{char}\F=2$ or $\rho\ne-1$, then the group consists only of the matrices~\eqref{eq:AutL14generic}.  Multiplying two such matrices gives
\[
\begin{pmatrix}1&0&0\\u-1&u&0\\0&0&t\end{pmatrix}
\begin{pmatrix}1&0&0\\v-1&v&0\\0&0&s\end{pmatrix}
=
\begin{pmatrix}1&0&0\\uv-1&uv&0\\0&0&ts\end{pmatrix}.
\]
Therefore the correspondence of this matrix to $(u,t)$ is an isomorphism onto $\F^{\times}\times\F^{\times}$.

Let now $\operatorname{char}\F\ne2$ and $\rho=-1$.  Denote by $H$ the subgroup consisting of the matrices~\eqref{eq:AutL14generic}, and put
\[
 \tau=\begin{pmatrix}-1&0&0\\1&0&1\\1&1&0\end{pmatrix}.
\]
Then $\tau^2=1$.  Every matrix in~\eqref{eq:AutL14minus} belongs to $\tau H$, and every element of $G$ belongs either to $H$ or to $\tau H$.  The two sets do not intersect, because otherwise $\tau$ would belong to $H$.  Finally,
\[
 \tau
 \begin{pmatrix}1&0&0\\u-1&u&0\\0&0&t\end{pmatrix}
 \tau^{-1}
 =
 \begin{pmatrix}1&0&0\\t-1&t&0\\0&0&u\end{pmatrix}.
\]
Thus $H\trianglelefteq G$, $H\cong\F^{\times}\times\F^{\times}$, and $G$ is a semidirect product of $H$ by the subgroup $\langle\tau\rangle$ of order two; conjugation by $\tau$ interchanges the two multiplicative factors.
\end{proof}

\subsection{The Leibniz algebra \texorpdfstring{$L_{16}(\beta,\gamma)$}{L16(beta,gamma)}}

\begin{theorem}\label{thm:L16}
Let $L=L_{16}(\beta,\gamma)$, where $X^2-\gamma X-\beta$ is irreducible over $\F$, and let $G=\Aut(L)$.  In particular, $\beta\ne0$.

If $\gamma\ne0$ or $\operatorname{char}\F=2$, then $\Xi(G)$ consists of all matrices
\begin{equation}\label{eq:AutL16plus}
\begin{pmatrix}
1&0&0\\[1mm]
b-\dfrac{\gamma(a-1)}{\beta}&a&\beta b\\[3mm]
\dfrac{a-1}{\beta}&b&a+\gamma b
\end{pmatrix},
\qquad (a,b)\ne(0,0).
\end{equation}
If $E=\F(\theta)$, where $\theta^2=\gamma\theta+\beta$, then $G\cong E^{\times}$.

If $\operatorname{char}\F\ne2$ and $\gamma=0$, then, in addition to the matrices in~\eqref{eq:AutL16plus}, the group $\Xi(G)$ contains all matrices
\begin{equation}\label{eq:AutL16minus}
\begin{pmatrix}
-1&0&0\\[1mm]
-b&a&-\beta b\\[2mm]
\dfrac{1-a}{\beta}&b&-a
\end{pmatrix},
\qquad (a,b)\ne(0,0).
\end{equation}
In this case $G$ is a semidirect product of $E^{\times}$ by a group of order two; the nontrivial element acts on $E$ by $a+b\theta\mapsto a-b\theta$.
\end{theorem}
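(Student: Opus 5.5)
The plan is to work through the left multiplication by $a_1$ on the Leibniz kernel, and to recognize $\Leib(L)$ as a one-dimensional $E$-vector space.

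\textbf{Setup.} First note that $\Leib(L)=[L,L]=\F a_2\oplus\F a_3$, and that this subspace lies in the left center, because all products $[a_2,\cdot]$ and $[a_3,\cdot]$ vanish. Let $M$ be the restriction of left multiplication by $a_1$ to $\Leib(L)$. Then
\[
 Ma_2=a_3,\qquad Ma_3=\beta a_2+\gamma a_3,\qquad M^2=\gamma M+\beta .
\]
Since $X^2-\gamma X-\beta$ is irreducible, $M$ has no eigenvector in $\Leib(L)$. Also $\beta\ne0$, since otherwise $X$ would be a root.

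By Lemma~\ref{lem:characteristic}, an automorphism $f$ satisfies $f(a_1)=\alpha_1a_1+w$ with $w\in\Leib(L)$ and $\alpha_1\ne0$. Since $\Leib(L)$ is in the left center, we have $[f(a_1),y]=\alpha_1[a_1,y]$ for every $y$. The defining relations then force
\[
 f(a_2)=[f(a_1),f(a_1)]=\alpha_1^2a_2+\alpha_1Mw,\qquad f(a_3)=\alpha_1Mf(a_2).
\]
So $f$ is determined by $\alpha_1$ and $w$.

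\textbf{Key step: the last relation.} The relation $[a_1,a_3]=\beta a_2+\gamma a_3$ translates into $(\alpha_1^2M^2-\gamma\alpha_1M-\beta)f(a_2)=0$. Using $M^2=\gamma M+\beta$, this becomes
\[
 \bigl(\gamma\alpha_1(\alpha_1-1)M+\beta(\alpha_1^2-1)\bigr)f(a_2)=0 .
\]
Here $f(a_2)\ne0$, and $M$ has no eigenvector, so an operator $cM+d$ with $(c,d)\ne(0,0)$ kills no nonzero vector. Hence
\[
 \gamma\alpha_1(\alpha_1-1)=0,\qquad \beta(\alpha_1^2-1)=0 .
\]
Since $\beta\ne0$, we get $\alpha_1=\pm1$. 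The value $\alpha_1=-1\ne1$ additionally requires $2\gamma=0$, i.e.\ $\gamma=0$ with $\operatorname{char}\F\ne2$ (in characteristic $2$ it coincides with $\alpha_1=1$). This case split is the one real point of the proof; everything else is bookkeeping.

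\textbf{Explicit matrices.} For fixed $\alpha_1$, the map $w\mapsto f(a_2)$ is a bijection of $\Leib(L)$ onto itself, since $M$ is invertible. So $f(a_2)=aa_2+ba_3$ is arbitrary, with $(a,b)\ne(0,0)$ needed for nondegeneracy, and $w=\alpha_1M^{-1}(f(a_2)-\alpha_1^2a_2)$. Use
\[
 M^{-1}a_3=a_2,\qquad M^{-1}a_2=(a_3-\gamma a_2)/\beta .
\]
For $\alpha_1=1$ this gives the first column of \eqref{eq:AutL16plus}, and $f(a_3)=Mf(a_2)=\beta b\,a_2+(a+\gamma b)a_3$ gives the third column. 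For $\alpha_1=-1$ and $\gamma=0$, the same computation gives \eqref{eq:AutL16minus}. The converse follows because every step above was an equivalence: the only relations are those for $[a_1,a_1]$, $[a_1,a_2]$, $[a_1,a_3]$, and all other basis products are zero on both sides.

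\textbf{Group structure.} Identify $\Leib(L)$ with $E$ via $a_2\mapsto1$ and $a_3\mapsto\theta$, so that $M$ becomes multiplication by $\theta$. Restriction to $\Leib(L)$ is a homomorphism $G\to GL(\Leib(L))$. It is injective on the subgroup with $\alpha_1=1$, since $f$ is determined by $\alpha_1$ and $f|_{\Leib(L)}$. On that subgroup, $f|_{\Leib(L)}$ commutes with $M$ and sends $1\mapsto a+b\theta$, so it is multiplication by $a+b\theta$. This gives an isomorphism of the $\alpha_1=1$ subgroup with $E^{\times}$.

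In the exceptional case, take $\tau$ to be \eqref{eq:AutL16minus} with $a=1$, $b=0$. It acts on $\Leib(L)$ by $a_2\mapsto a_2$, $a_3\mapsto-a_3$, which is the conjugation $\theta\mapsto-\theta$ of $E$ (valid since $\gamma=0$). We have $\tau^2=1$. The $\alpha_1=1$ subgroup has index $2$, hence is normal. Conjugation by $\tau$ sends multiplication by $a+b\theta$ to multiplication by $a-b\theta$. This yields the stated semidirect product.
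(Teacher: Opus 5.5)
Your proof is correct and reaches the same dichotomy as the paper, $\alpha_1^2=1$ and $(\alpha_1-1)\gamma=0$, but you execute it differently.

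The paper writes out all coordinates of $f$. It packages the relations from $[a_1,a_2]=a_3$ and $[a_1,a_3]=\beta a_2+\gamma a_3$ into the intertwining identity $B\,M=\alpha_1 M\,B$. Here $B$ is the matrix of $f|_K$ with $K=\Leib(L)$, and $M=\begin{psmallmatrix}0&\beta\\1&\gamma\end{psmallmatrix}$ is the matrix of your operator. It reads off $\alpha_1^2=1$ from determinants and $\gamma=\alpha_1\gamma$ from traces. It then solves for the entries case by case and proves nondegeneracy from the explicit determinants $a^2+\gamma ab-\beta b^2$ and $a^2-\beta b^2$. It checks the converse by substitution. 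Finally, it identifies the $\alpha_1=1$ family with $E^{\times}$ by stating that products of the $3\times3$ matrices reproduce multiplication in $E$.

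You instead use $K\le\zleft(L)$ to make $f$ a function of $(\alpha_1,w)$. You collapse the last relation to a single polynomial in $M$ annihilating the nonzero vector $f(a_2)$, and then use the fact that $M$ has no eigenvector.

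The comparison cuts both ways:
\begin{itemize}
\item The paper's determinant/trace step uses only $\beta\ne0$ and invertibility of $f|_K$. Your version already needs irreducibility at that point.
\item Your treatment of the group is more conceptual. Since $f|_K$ commutes with multiplication by $\theta$, it is $E$-linear, hence multiplication by $a+b\theta$. So the homomorphism property of $\Phi$, its injectivity, nondegeneracy (equivalent to $a+b\theta\ne0$), and the Galois action of $\tau$ all follow without the $3\times3$ matrix multiplications the paper relies on.
\end{itemize}

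One point you should state explicitly in the converse: $(a,b)\ne(0,0)$ is not only necessary but also sufficient for invertibility. This holds because $f|_K$ sends $a_2\mapsto v$ and $a_3\mapsto\alpha_1Mv$ with $v=f(a_2)\ne0$, and $v$, $Mv$ are linearly independent since $M$ has no eigenvector.
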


\begin{proof}
Let $f\in G$.  Put $K=\Leib(L)=\F a_2\oplus\F a_3$.  Since $K$ is invariant under every automorphism,
\[
\begin{aligned}
 f(a_1)&=\alpha_1a_1+\alpha_2a_2+\alpha_3a_3,\\
 f(a_2)&=\beta_2a_2+\beta_3a_3,\\
 f(a_3)&=\gamma_2a_2+\gamma_3a_3,
\end{aligned}
\]
where
\[
 \alpha_1(\beta_2\gamma_3-\beta_3\gamma_2)\ne0.
\]
We now apply the defining products one by one.

From $[a_1,a_1]=a_2$ we obtain
\[
\begin{aligned}
 \beta_2a_2+\beta_3a_3
 &=f(a_2)=[f(a_1),f(a_1)]\\
 &=\alpha_1^2[a_1,a_1]
   +\alpha_1\alpha_2[a_1,a_2]
   +\alpha_1\alpha_3[a_1,a_3]\\
 &=\alpha_1(\alpha_1+\beta\alpha_3)a_2
   +\alpha_1(\alpha_2+\gamma\alpha_3)a_3.
\end{aligned}
\]
Thus
\begin{equation}\label{eq:L16-1}
 \beta_2=\alpha_1(\alpha_1+\beta\alpha_3),
 \qquad
 \beta_3=\alpha_1(\alpha_2+\gamma\alpha_3).
\end{equation}
From $[a_1,a_2]=a_3$ we get
\[
\begin{aligned}
 \gamma_2a_2+\gamma_3a_3
 &=f(a_3)=[f(a_1),f(a_2)]\\
 &=\alpha_1\beta_2[a_1,a_2]
   +\alpha_1\beta_3[a_1,a_3]\\
 &=\alpha_1\beta\beta_3a_2
   +\alpha_1(\beta_2+\gamma\beta_3)a_3.
\end{aligned}
\]
Hence
\begin{equation}\label{eq:L16-2}
 \gamma_2=\alpha_1\beta\beta_3,
 \qquad
 \gamma_3=\alpha_1(\beta_2+\gamma\beta_3).
\end{equation}
Finally,
\[
 f(\beta a_2+\gamma a_3)=f([a_1,a_3])=[f(a_1),f(a_3)].
\]
Comparing the coefficients of $a_2$ and $a_3$, we obtain
\begin{align}
 \beta\beta_2+\gamma\gamma_2&=\alpha_1\beta\gamma_3, \label{eq:L16-3}\\
 \beta\beta_3+\gamma\gamma_3&=\alpha_1(\gamma_2+\gamma\gamma_3). \label{eq:L16-4}
\end{align}

We now extract the restriction on $\alpha_1$.  The four scalar equalities~\eqref{eq:L16-2}--\eqref{eq:L16-4} may be written together as
\[
 \begin{pmatrix}\beta_2&\gamma_2\\\beta_3&\gamma_3\end{pmatrix}
 \begin{pmatrix}0&\beta\\1&\gamma\end{pmatrix}
 =\alpha_1
 \begin{pmatrix}0&\beta\\1&\gamma\end{pmatrix}
 \begin{pmatrix}\beta_2&\gamma_2\\\beta_3&\gamma_3\end{pmatrix}.
\]
The first matrix is nondegenerate because $f$ induces an automorphism of
$K=\F a_2\oplus\F a_3$, and the second matrix has determinant $-\beta\ne0$.
Taking determinants of the displayed equality therefore gives
\[
 \det(f|_K)(-\beta)
 =\alpha_1^2(-\beta)\det(f|_K),
\]
and hence
\[
 \alpha_1^2=1.
\]
Multiplying the displayed matrix equality on the left by the inverse of the first matrix, we obtain
\[
 \begin{pmatrix}0&\beta\\1&\gamma\end{pmatrix}
 =
 \alpha_1
 \begin{pmatrix}\beta_2&\gamma_2\\\beta_3&\gamma_3\end{pmatrix}^{-1}
 \begin{pmatrix}0&\beta\\1&\gamma\end{pmatrix}
 \begin{pmatrix}\beta_2&\gamma_2\\\beta_3&\gamma_3\end{pmatrix}.
\]
The two matrices on the right which surround
$\begin{psmallmatrix}0&\beta\\1&\gamma\end{psmallmatrix}$
are inverse to one another.  Taking traces gives
\[
 \gamma=\alpha_1\gamma.
\]
Thus
\begin{equation}\label{eq:L16-alpha1}
 \alpha_1^2=1,
 \qquad
 (\alpha_1-1)\gamma=0.
\end{equation}

First suppose that $\alpha_1=1$.  Put
\[
 \beta_2=a,
 \qquad
 \beta_3=b.
\]
Then~\eqref{eq:L16-2} gives
\[
 \gamma_2=\beta b,
 \qquad
 \gamma_3=a+\gamma b.
\]
From~\eqref{eq:L16-1},
\[
 a=1+\beta\alpha_3,
 \qquad
 b=\alpha_2+\gamma\alpha_3,
\]
and therefore
\[
 \alpha_3=\frac{a-1}{\beta},
 \qquad
 \alpha_2=b-\frac{\gamma(a-1)}{\beta}.
\]
This gives the matrices~\eqref{eq:AutL16plus}.  Their determinant is
\[
 a^2+\gamma ab-\beta b^2.
\]
If this determinant were zero and $b\ne0$, then $a/b$ would be a root of
$X^2+\gamma X-\beta$, which is obtained from the irreducible polynomial
$X^2-\gamma X-\beta$ by replacing $X$ with $-X$.  This is impossible.  If $b=0$, then $(a,b)\ne(0,0)$ gives $a\ne0$.  Hence every matrix in~\eqref{eq:AutL16plus} is nondegenerate.

Conversely, for a matrix~\eqref{eq:AutL16plus}, direct calculation gives
\[
\begin{aligned}
 [f(a_1),f(a_1)]&=a a_2+b a_3=f(a_2),\\
 [f(a_1),f(a_2)]&=\beta b a_2+(a+\gamma b)a_3=f(a_3),\\
 [f(a_1),f(a_3)]
 &=\beta f(a_2)+\gamma f(a_3).
\end{aligned}
\]
All products with left factor $f(a_2)$ or $f(a_3)$ are zero because $K\le\zleft(L)$.  Thus these matrices are automorphisms.

By~\eqref{eq:L16-alpha1}, the case $\alpha_1=-1$ can occur only when $\operatorname{char}\F\ne2$ and $\gamma=0$.  In this case, put again $\beta_2=a$ and $\beta_3=b$.  Equalities~\eqref{eq:L16-2} give
\[
 \gamma_2=-\beta b,
 \qquad
 \gamma_3=-a,
\]
while~\eqref{eq:L16-1} gives
\[
 a=1-\beta\alpha_3,
 \qquad
 b=-\alpha_2.
\]
Hence
\[
 \alpha_2=-b,
 \qquad
 \alpha_3=\frac{1-a}{\beta},
\]
and we obtain the second family~\eqref{eq:AutL16minus}.  Its determinant is
\[
 a^2-\beta b^2.
\]
If $b\ne0$ and this determinant were zero, then $a/b$ would be a root of
$X^2-\beta$.  Since here $\gamma=0$, this is precisely the polynomial
$X^2-\gamma X-\beta$, which is irreducible by hypothesis.  This is impossible.
If $b=0$, then $(a,b)\ne(0,0)$ gives $a\ne0$.  Hence every matrix in~\eqref{eq:AutL16minus} is nondegenerate.

For such a matrix we have
\[
\begin{aligned}
 f(a_1)&=-a_1-ba_2+\frac{1-a}{\beta}a_3,\\
 f(a_2)&=aa_2+ba_3,\\
 f(a_3)&=-\beta ba_2-aa_3.
\end{aligned}
\]
Using $[a_1,a_1]=a_2$, $[a_1,a_2]=a_3$, and $[a_1,a_3]=\beta a_2$, we obtain
\[
\begin{aligned}
 [f(a_1),f(a_1)]
 &=aa_2+ba_3=f(a_2),\\
 [f(a_1),f(a_2)]
 &=-\beta ba_2-aa_3=f(a_3),\\
 [f(a_1),f(a_3)]
 &=\beta aa_2+\beta ba_3=\beta f(a_2).
\end{aligned}
\]
All products with left factor $f(a_2)$ or $f(a_3)$ are zero because
$K\le\zleft(L)$.  Therefore every matrix in~\eqref{eq:AutL16minus} defines an automorphism.

It remains to describe the abstract group.  Let $C$ be the subgroup formed by the matrices~\eqref{eq:AutL16plus}.  Let $E=\F(\theta)$, where
\[
 \theta^2=\gamma\theta+\beta.
\]
Define
\[
 \Phi:E^{\times}\longrightarrow C
\]
by sending $a+b\theta$ to the matrix~\eqref{eq:AutL16plus} with the same coefficients $a,b$.  We show directly that $\Phi$ is an isomorphism.  If
\[
 (a+b\theta)(c+d\theta)
 =(ac+\beta bd)+(ad+bc+\gamma bd)\theta,
\]
then multiplication of the corresponding matrices gives exactly the matrix~\eqref{eq:AutL16plus} with parameters
\[
 ac+\beta bd,
 \qquad
 ad+bc+\gamma bd.
\]
Therefore $\Phi$ is a homomorphism.  If $\Phi(a+b\theta)=I$, then the lower right $2\times2$ block gives $a=1$ and $b=0$, so the kernel is trivial.  Every matrix in $C$ occurs from some nonzero $a+b\theta$, and hence $\Phi$ is surjective.  Thus
\[
 C\cong E^{\times}.
\]

If $\gamma\ne0$ or $\operatorname{char}\F=2$, only the case $\alpha_1=1$ is possible, so $G=C$.  Suppose now that $\operatorname{char}\F\ne2$ and $\gamma=0$.  Put
\[
 \tau=\begin{pmatrix}-1&0&0\\0&1&0\\0&0&-1\end{pmatrix}.
\]
Then $\tau^2=1$, every matrix in~\eqref{eq:AutL16minus} belongs to $\tau C$, and every element of $G$ belongs either to $C$ or to $\tau C$.  Moreover, direct multiplication gives
\[
 \tau\,\Phi(a+b\theta)\,\tau^{-1}=\Phi(a-b\theta).
\]
Consequently $C\trianglelefteq G$, $C\cap\langle\tau\rangle=\{1\}$, and $G=C\langle\tau\rangle$.  Hence $G$ is a semidirect product of $E^{\times}$ by a group of order two; the nontrivial element acts by the field automorphism $a+b\theta\mapsto a-b\theta$.
\end{proof}

\section{Complete summary}

For convenience, we collect the results in one table.  In the rows corresponding to previously known cases, the published formulas are rewritten in the basis fixed in Table~\ref{tab:status}.  No new claim of priority is intended for those rows.

\begingroup
\small
\renewcommand{\arraystretch}{1.25}
\begin{longtable}{@{}P{0.09\textwidth}P{0.63\textwidth}P{0.22\textwidth}@{}}
\caption{Automorphism groups of all three-dimensional non-Lie Leibniz algebras in the refined list.}\label{tab:summary}\\
\toprule
Type & Description of $\Aut(L)$ & Source\\
\midrule
\endfirsthead
\toprule
Type & Description of $\Aut(L)$ & Source\\
\midrule
\endhead
\midrule
\multicolumn{3}{r}{\emph{Continued on next page}}\\
\endfoot
\bottomrule
\endlastfoot

$L_1$ &
$\left\{\begin{psmallmatrix}a&0&0\\ b&d&0\\ c&e&a^2\end{psmallmatrix}:a,d\in\F^{\times},\ b,c,e\in\F\right\}$ &
\cite{AutLowDims}\\

$L_2$ &
$\left\{\begin{psmallmatrix}a&0&0\\ b&a+b&0\\ c&d&a(a+b)\end{psmallmatrix}:a(a+b)\ne0\right\}$ &
\cite{AutLei3,AutNilpotent}\\

$L_3(\alpha)$ &
Formula~\eqref{eq:AutL3generic}; for $\alpha=1$ the additional matrices~\eqref{eq:AutL3exceptional} occur.  The normal subgroup $C_G(L/\F a_3)$ is isomorphic to $\F^{+}\times\F^{+}$; the corresponding complement is $\F^{\times}\times\F^{\times}$, with an additional factor of order two for $\alpha=1$. &
Theorem~\ref{thm:L3}\\

$L_4$ &
Formula~\eqref{eq:AutL4}; a semidirect product of $\F^{+}\times\F^{+}$ by $\F^{+}\times\F^{\times}$. &
Theorem~\ref{thm:L4}\\

$L_5(\beta)$ &
$\left\{\begin{psmallmatrix}a&-\varepsilon\beta b&0\\ b&\varepsilon a&0\\ c&d&a^2+\beta b^2\end{psmallmatrix}:(a,b)\ne(0,0),\ \varepsilon^2=1,\ c,d\in\F\right\}$ &
\cite{AutAnisotropic}\\

$L_6(\delta)$ &
Formula~\eqref{eq:AutL6}; a semidirect product of $\F^{+}\times\F^{+}$ by $E^{\times}$, where $E=\F(\theta)$ and $\theta^2-\theta+\delta=0$. &
Theorem~\ref{thm:L6}\\

$L_7$ &
Formula~\eqref{eq:AutL7}; a direct product of $\F^{+}$ and the semidirect product $\F^{+}\rtimes\F^{\times}$ with the usual scalar action. &
Theorem~\ref{thm:L7}\\

$L_8(\lambda)$ &
Formula~\eqref{eq:AutL8}.  If $\lambda$ is not a square in $\F$, then $\Aut(L)\cong\F^{+}$.  If $\lambda$ is a square in $\F$, then the group has a normal subgroup isomorphic to $\F^{+}$ and a complement isomorphic to $\F^{\times}$; conjugation sends $z$ to $u^2z$. &
Theorem~\ref{thm:L8}, Corollary~\ref{cor:L8}\\

$L_9$ &
Formula~\eqref{eq:AutL9}; a direct product of $\F^{\times}$ and the usual semidirect product $\F^{+}\rtimes\F^{\times}$. &
\cite{DiBartolo} if $\operatorname{char}\F\ne2$; Theorem~\ref{thm:L9} if $\operatorname{char}\F=2$\\

$L_{10}(r)$ &
Formula~\eqref{eq:AutL10}; a direct product of $\F^{\times}$ and the usual semidirect product $\F^{+}\rtimes\F^{\times}$. &
Theorem~\ref{thm:L10}\\

$L_{11}$ &
Formula~\eqref{eq:AutL11}; the semidirect product $\F^{+}\rtimes\F^{\times}$ with the usual scalar action. &
Theorem~\ref{thm:L11}\\

$L_{12}$ &
$\left\{\begin{psmallmatrix}a&0&0\\ b&a^2&0\\ c&ab&a^3\end{psmallmatrix}:a\in\F^{\times},\ b,c\in\F\right\}$ &
\cite{AutCyclic2022,AutCyclic,AutLowDims}\\

$L_{13}$ &
$\left\{\begin{psmallmatrix}1&0&0\\ b&1+b&0\\ c&b&1\end{psmallmatrix}:b\ne-1,\ c\in\F\right\}$ &
\cite{AutCyclic,AutNonNilpotent}; see Remark~\ref{rem:basis}\\

$L_{14}(\rho)$ &
$GL_2(\F)$ if $\rho=1$; $\F^{\times}\times\F^{\times}$ in the generic case; for $\operatorname{char}\F\ne2$ and $\rho=-1$, a semidirect product of $\F^{\times}\times\F^{\times}$ by a group of order two which interchanges the factors. &
\cite{AutNonNilpotent2026} for $\rho=1$; Theorem~\ref{thm:L14} otherwise\\

$L_{15}$ &
$\left\{\begin{psmallmatrix}1&0&0\\ a-1&a&0\\ b-a+1&b&a\end{psmallmatrix}:a\in\F^{\times},\ b\in\F\right\}$ &
\cite{AutCyclic,AutNonNilpotent2026}; see Remark~\ref{rem:basis}\\

$L_{16}(\beta,\gamma)$ &
Let $E=\F(\theta)$, $\theta^2=\gamma\theta+\beta$.  Then $\Aut(L)\cong E^{\times}$, except when $\operatorname{char}\F\ne2$ and $\gamma=0$, in which case $\Aut(L)$ is a semidirect product of $E^{\times}$ by a group of order two.  The matrices are~\eqref{eq:AutL16plus} and~\eqref{eq:AutL16minus}. &
Theorem~\ref{thm:L16}; compare \cite{AutCyclic2022,AutCyclic}\\

\end{longtable}
\endgroup

\begin{remark}
The table also shows that the automorphism group does not determine the isomorphism type of a three-dimensional Leibniz algebra.  For example, the parameter $r$ distinguishes the algebras $L_{10}(r)$, whereas the abstract automorphism groups obtained in Theorem~\ref{thm:L10} are isomorphic for all $r\ne0$.
\end{remark}

\end{document}